\UseRawInputEncoding
\documentclass{amsart}

\usepackage{geometry, amssymb, verbatim, esint}

\usepackage[colorlinks=true,urlcolor=blue, citecolor=red,linkcolor=blue,linktocpage,pdfpagelabels, bookmarksnumbered,bookmarksopen]{hyperref}
\usepackage[hyperpageref]{backref}

\numberwithin{equation}{section}

\newtheorem{lm}{Lemma}[section]
\newtheorem{prop}[lm]{Proposition}
\newtheorem{teo}[lm]{Theorem}
\newtheorem{coro}[lm]{Corollary}
\theoremstyle{definition}
\newtheorem{oss}[lm]{Remark}
\newtheorem*{ack}{Acknowledgments}

\author[Brasco]{Lorenzo Brasco}

\address[L.\ Brasco]{Dipartimento di Matematica e Informatica
	\newline\indent
	Universit\`a degli Studi di Ferrara
	\newline\indent
	Via Machiavelli 35, 44121 Ferrara, Italy}
\email{lorenzo.brasco@unife.it}

\title{Convex duality for principal frequencies}

%\date{10/05/2021}

\begin{document}

\begin{abstract}
We consider the sharp Sobolev-Poincar\'e constant for the embedding of $W^{1,2}_0(\Omega)$ into $L^q(\Omega)$. We show that such a constant exhibits an unexpected dual variational formulation, in the range $1<q<2$. Namely, this can be written as a convex minimization problem, under a divergence--type constraint.
This is particularly useful in order to prove lower bounds. The result generalizes what happens for the torsional rigidity (corresponding to $q=1$) and extends up to the case of the first eigenvalue of the Dirichlet-Laplacian (i.e. to $q=2$).
\end{abstract}

\subjclass[2010]{35P15, 39B62, 49N15}
\keywords{Torsional rigidity, Laplacian eigenvalues, inradius, Cheeger constant, geometric estimates, convex duality, hidden convexity}

\maketitle

\begin{center}
\begin{minipage}{10cm}
\small
\tableofcontents
\end{minipage}
\end{center}

\section{Introduction}

\subsection{Overview}
Let $\Omega\subset\mathbb{R}^N$ be an open set, we denote by $W^{1,2}_0(\Omega)$ the closure of $C^\infty_0(\Omega)$ with respect to the Sobolev norm
\[
\|\varphi\|_{W^{1,2}(\Omega)}=\|\varphi\|_{L^2(\Omega)}+\|\nabla \varphi\|_{L^2(\Omega;\mathbb{R}^N)},\qquad \mbox{ for every } \varphi\in C^\infty_0(\Omega).
\] 
In what follows, we will always consider for simplicity sets with finite measure. This guarantees that
 we have at our disposal the Poincar\'e inequality
\[
C_\Omega\,\int_\Omega |\varphi|^2\,dx\le \int_\Omega |\nabla \varphi|^2\,dx,\qquad \mbox{ for every } \varphi\in W^{1,2}_0(\Omega).
\]
Consequently, the space $W^{1,2}_0(\Omega)$ can be equivalently endowed with the norm
\[
\|\varphi\|_{W^{1,2}_0(\Omega)}:=\|\nabla \varphi\|_{L^2(\Omega;\mathbb{R}^N)}.
\]
For $1\le q\le 2$, we consider the {\it generalized principal frequency}
\[
\lambda_1(\Omega;q)=\min_{\varphi\in W^{1,2}_0(\Omega)\setminus\{0\}} \frac{\displaystyle\int_\Omega |\nabla \varphi|^2\,dx}{\displaystyle\left(\int_\Omega |\varphi|^q\,dx\right)^\frac{2}{q}},
\]
considered also in \cite{KJ} and more recently in \cite{va, CR, Zu}, among others. The fact that the minimum above is attained in $W^{1,2}_0(\Omega)$ follows from the compactness of the embedding $W^{1,2}_0(\Omega)\hookrightarrow L^q(\Omega)$. The latter holds true since we are assuming that $\Omega$ has finite measure, see \cite[Theorem 2.8]{BS}.
\par
Two important cases deserve to be singled-out from the very beginning: $q=1$ and $q=2$.
In the first case, this quantity actually coincides with the reciprocal of the so-called {\it torsional rigidity} of $\Omega$
\[
\frac{1}{\lambda_1(\Omega;1)}=T(\Omega):=\max_{\varphi\in W^{1,2}_0(\Omega)\setminus\{0\}} \frac{\displaystyle\left(\int_\Omega |\varphi|\,dx\right)^2}{\displaystyle\int_\Omega |\nabla \varphi|^2\,dx}.
\]
For $q=2$, on the other hand, the quantity $\lambda_1(\Omega;2)$ is nothing but the {\it first eigenvalue of the Dirichlet-Laplacian} or {\it principal frequency} of $\Omega$. This is the smallest real number $\lambda$ such that the Helmholtz equation
\[
-\Delta u=\lambda\,u,\qquad \mbox{ in }\Omega,
\]
admits a nontrivial weak solution in $W^{1,2}_0(\Omega)$.
For simplicity, we will simply denote this quantity by $\lambda_1(\Omega)$.
\vskip.2cm\noindent
In general, the quantity $\lambda_1(\Omega;q)$ can not be explicitly computed. It is then quite useful to seek for (possibly sharp) estimates in terms of geometric quantities of the set $\Omega$. Some particular instances of results in this direction are given by: 
\begin{itemize}
\item the {\it Faber-Krahn inequality} (see for example \cite[Theorem 2]{KJ})
\[
\lambda_1(\Omega;q)\ge \left(\frac{\lambda_1(B;q)}{|B|^{1-\frac{2}{N}-\frac{2}{q}}}\right)\,|\Omega|^{1-\frac{2}{N}-\frac{2}{q}},
\]
which is valid for every open set $\Omega\subset\mathbb{R}^N$ with finite measure. Here $B$ is any $N-$dimensional open ball;
\vskip.2cm
\item the {\it Hersch-Makai--type inequality} (see \cite[Theorem 1.1]{BM})
\[
\lambda_1(\Omega;q)\ge \left(\frac{\pi_{2,q}}{2}\right)^2\, \frac{|\Omega|^\frac{q-2}{q}}{R_\Omega^2},
\]
which is valid for every open bounded {\it convex} set $\Omega\subset\mathbb{R}^N$. Here $R_\Omega$ is the {\it inradius}, i.e. the radius of a largest ball contained in $\Omega$ and $\pi_{2,q}$ is the constant defined by
\[
\pi_{2,q}=\inf_{\varphi\in W^{1,2}_0((0,1))\setminus\{0\}}\frac{\|\varphi'\|_{L^2((0,1))}}{\|\varphi\|_{L^q((0,1))}}.
\]
This inequality is the extension to the range $1\le q\le 2$ of \cite[equation (3')]{Ma} by Makai (case $q=1$) and of \cite[Th\'eor\`eme 8.1]{He2} by Hersch (case $q=2$);
\vskip.2cm
\item the {\it P\'olya--type inequality} (see \cite[Main Theorem]{Br})
\[
\lambda_1(\Omega;q)\le \left(\frac{\pi_{2,q}}{2}\right)^2\,\left(\frac{P(\Omega)}{|\Omega|^{\frac{1}{2}+\frac{1}{q}}}\right)^2,
\]
which is again valid for open bounded convex sets. Here $P(\Omega)$ stands for the perimeter of $\Omega$. This inequality generalizes the original result by P\'olya \cite{Po} for the cases $q=1$ and $q=2$.  
\end{itemize}
We point out that all the previous estimates are sharp. All the exponents appearing above are of course dictated by scale invariance.
\par
As a general rule, we can assert that lower bounds on $\lambda_1(\Omega;q)$ are harder to obtain with respect to upper bounds, since every generalized principal frequency is defined as an {\it infimum}. It would then be interesting to investigate whether $\lambda_1(\Omega;q)$ admits a sort of ``dual'' equivalent formulation, in terms of a supremum. This is the main goal of the present paper.

\subsection{Towards duality}
\label{sec:torsion}
At this aim, it is interesting to have a closer look at the case $q=1$. It is well-known that the torsional rigidity can be equivalently rewritten as an unconstrained concave maximization problem, i.e.
\begin{equation}
\label{unconstrained}
\max_{\varphi\in W^{1,2}_0(\Omega)}\left\{2\,\int_\Omega \varphi\,dx-\int_\Omega |\nabla \varphi|^2\,dx\right\}=T(\Omega).
\end{equation}
As such, it admits in a natural way a dual convex minimization problem
\begin{equation}
\label{dualT}
\begin{split}
\min_{\phi\in L^2(\Omega;\mathbb{R}^N)}& \left\{\int_\Omega |\phi|^2\,dx\, :\, -\mathrm{div\,}\phi=1 \mbox{ in }\Omega\right\}\\
&=\max_{\varphi\in W^{1,2}_0(\Omega)}\left\{2\,\int_\Omega \varphi\,dx-\int_\Omega |\nabla \varphi|^2\,dx\right\}=T(\Omega),
\end{split}
\end{equation}
which gives yet another equivalent definition of torsional rigidity. Here the divergence constraint has to be intended in distributional sense, i.e. 
\begin{equation}
\label{divergenzia}
\int_\Omega \langle \phi,\nabla \varphi\rangle\,dx=\int_\Omega \varphi\,dx,\qquad \mbox{ for every } \varphi\in C^1_0(\Omega).
\end{equation}
By means of a standard density argument, it is easily seen that we can enlarge the class of competitors in \eqref{divergenzia} to the whole $W^{1,2}_0(\Omega)$, since $\phi\in L^2(\Omega;\mathbb{R}^N)$.
\par
For a better understanding of the contents of this paper, it may be useful to recall the proof of \eqref{dualT}. At first, one observes that for every admissible vector fields and every $\varphi\in W^{1,2}_0(\Omega)$, we have 
\[
2\,\int_\Omega \varphi\,dx-\int_\Omega |\nabla \varphi|^2\,dx=2\,\int_\Omega \langle \phi,\nabla \varphi\rangle\,dx-\int_\Omega |\nabla \varphi|^2\,dx,
\]
by virtue of \eqref{divergenzia}.
We can now use Young's inequality 
\[
2\,\langle \phi,\nabla \varphi\rangle-|\nabla \varphi|^2\le |\phi|^2.
\] 
By integrating this inequality, from the identity above we get
\[
2\,\int_\Omega \varphi\,dx-\int_\Omega |\nabla \varphi|^2\,dx\le \int_\Omega |\phi|^2\,dx.
\] 
The arbitrariness of $\varphi$ and $\phi$ automatically gives
\[
\max_{\varphi\in W^{1,2}_0(\Omega)}\left\{2\,\int_\Omega \varphi\,dx-\int_\Omega |\nabla \varphi|^2\,dx\right\}\le \min_{\phi\in L^2(\Omega;\mathbb{R}^N)}  \left\{\int_\Omega |\phi|^2\,dx\, :\, -\mathrm{div\,}\phi=1 \mbox{ in }\Omega\right\}.
\]
On the other hand, by taking $\varphi=w$ to be the optimal function for the problem on the left-hand side, this satisfies the relevant Euler-Lagrange equation. The latter is given by
\[
-\Delta w=1,\qquad \mbox{ in }\Omega.
\]
Thus $\phi_0=\nabla w$ is an admissible vector field and we have 
\[
\int_\Omega |\phi_0|^2\,dx=2\,\int_\Omega \langle \nabla w,\phi_0\rangle\,dx-\int_\Omega |\nabla w|^2 =2\,\int_\Omega w\,dx-\int_\Omega |\nabla w|^2\,dx.
\]
This proves that 
\[
\min_{\phi\in L^2(\Omega;\mathbb{R}^N)}  \left\{\int_\Omega |\phi|^2\,dx\, :\, -\mathrm{div\,}\phi=1 \mbox{ in }\Omega\right\}\le \max_{\varphi\in W^{1,2}_0(\Omega)}\left\{2\,\int_\Omega \varphi\,dx-\int_\Omega |\nabla \varphi|^2\,dx\right\},
\]
as well. Thus \eqref{dualT} holds true and we also have obtained that the unique (by strict convexity) minimal vector field $\phi_0\in L^2(\Omega;\mathbb{R}^N)$ must be of the form
\[
\phi_0=\nabla w,
\]
with $w$ being the unique $W^{1,2}_0(\Omega)$ solution of $-\Delta w=1$. In conclusion, getting back to the notation $\lambda_1(\Omega;1)$, we obtain the following dual characterization of the relevant generalized principal frequency
\begin{equation}
\label{dual1}
\frac{1}{\lambda_1(\Omega;1)}=\min_{\phi\in L^2(\Omega;\mathbb{R}^N)}  \left\{\int_\Omega |\phi|^2\,dx\, :\, -\mathrm{div\,}\phi=1 \mbox{ in }\Omega\right\}.
\end{equation}

\subsection{Main results}

The main result of the present paper asserts that the dual characterization \eqref{dual1} {\it is not} an isolated exception. Actually, it is possible to prove that 
\[
\frac{1}{\lambda_1(\Omega;q)},
\] 
coincides with the {\it minimum} of a constrained convex minimization problem, for the whole range $1\le q\le 2$. The deep reason behind this result is a {\it hidden convex structure} of the problem which defines $\lambda_1(\Omega;q)$, see Remark \ref{oss:hidden} below. Such a convex structure, which apparently is still not very popular, fails for $q>2$ and this explains why our result has $1\le q\le 2$ as the natural range of validity. 
\par 
In order to precisely state the result, we need at first to introduce the following convex lower semicontinuous function $G_q:\mathbb{R}\times\mathbb{R}^N\to [0,+\infty]$, defined for $1<q\le 2$ by: 
\begin{equation}
\label{G}
G_q(s,\xi)=\left\{\begin{array}{rl}
\dfrac{|\xi|^q}{|s|^{q-1}},& \mbox{ if } \xi\in\mathbb{R}^N,\, s<0,\\
&\\
0, & \mbox{ if } s=0,\, \xi=0,\\
&\\
+\infty,& \mbox{ otherwise}.
\end{array}
\right.
\end{equation}
We then distinguish between the cases $q<2$ and $q=2$.
\begin{teo}[Sub-homogeneous case]
\label{teo:sub}
Let $1<q<2$ and let $\Omega\subset\mathbb{R}^N$ be an open set, with finite measure. If we set 
\[
\mathcal{A}(\Omega)=\Big\{(f,\phi)\in L^1_{\rm loc}(\Omega)\times L^2_{\rm loc}(\Omega;\mathbb{R}^N)\, :\, -\mathrm{div\,} \phi+f\ge 1 \mbox{ in }\Omega\Big\},
\]
then we have
\begin{equation}
\label{formulab1}
\frac{1}{\lambda_1(\Omega;q)}=(q-1)^{(q-1)\frac{2}{q}}\inf_{(f,\phi)\in \mathcal{A}(\Omega)} \Big\|G_q(f,\phi)\Big\|_{L^\frac{2}{2-q}(\Omega)}^\frac{2}{q},
\end{equation}
where $G_q$ is defined in \eqref{G}.
Moreover, if $w\in W^{1,2}_0(\Omega)$ denotes the unique positive solution of 
\[
\max_{\varphi\in W^{1,2}_0(\Omega)}\left\{\frac{2}{q}\,\int_\Omega |\varphi|^q\,dx-\int_\Omega |\nabla \varphi|^2\,dx\right\},
\]
we get that the pair $(f_0,\phi_0)$ defined by
\[
\phi_0=\frac{\nabla w}{w^{q-1}}\qquad \mbox{ and }\qquad f_0=-(q-1)\,\frac{|\nabla w|^2}{w^{q}},
\]
is a minimizer for the problem in \eqref{formulab1}.
\end{teo}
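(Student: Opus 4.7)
The strategy closely parallels the derivation of \eqref{dualT} in Subsection \ref{sec:torsion}: I will prove the two opposite inequalities in \eqref{formulab1} separately, the $\le$ direction via a pointwise Fenchel-type estimate tailored to $G_q$, and the reverse by showing that the pair $(f_0,\phi_0)$ is admissible and realizes the bound.

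The pointwise ingredient I would first establish is that, for every $s<0$, $\xi\in\mathbb{R}^N$, $t\ge 0$, $\eta\in\mathbb{R}^N$,
\[
q\,t^{q-1}\,\langle \xi,\eta\rangle+s\,t^q\le (q-1)^{q-1}\,G_q(s,\xi)\,|\eta|^q,
\]
obtained by maximizing the left-hand side in $t\ge 0$ (critical point $t^\star=(q-1)\,\langle\xi,\eta\rangle/|s|$ when $\langle\xi,\eta\rangle>0$, otherwise the bound is trivial) and then using $\langle\xi,\eta\rangle\le|\xi|\,|\eta|$. Given admissible $(f,\phi)$ with $\|G_q(f,\phi)\|_{L^{2/(2-q)}}<\infty$, I would test the distributional constraint $-\mathrm{div\,}\phi+f\ge 1$ against $\varphi^q$, for $\varphi\in W^{1,2}_0(\Omega)\cap L^\infty(\Omega)$ nonnegative, to obtain after integration by parts
\[
\int_\Omega \varphi^q\,dx\le q\int_\Omega \varphi^{q-1}\,\langle \phi,\nabla \varphi\rangle\,dx+\int_\Omega f\,\varphi^q\,dx.
\]
Applying the pointwise bound with $(t,\eta,s,\xi)=(\varphi,\nabla\varphi,f,\phi)$ estimates the right-hand side by $(q-1)^{q-1}\int_\Omega G_q(f,\phi)\,|\nabla\varphi|^q\,dx$, and H\"older with the conjugate exponents $2/q$ and $2/(2-q)$ turns this into
\[
\int_\Omega \varphi^q\,dx\le (q-1)^{q-1}\,\|G_q(f,\phi)\|_{L^{\frac{2}{2-q}}(\Omega)}\left(\int_\Omega |\nabla\varphi|^2\,dx\right)^{q/2}.
\]
Raising to the power $2/q$, dividing by $\int_\Omega|\nabla\varphi|^2\,dx$, and taking the supremum over $\varphi$ yields the $\le$ half of \eqref{formulab1}.

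For the converse, I would directly verify that $(f_0,\phi_0)$ is admissible and saturates the bound. A chain-rule computation of $\mathrm{div\,}(\nabla w/w^{q-1})$ combined with the Euler--Lagrange equation $-\Delta w=w^{q-1}$ satisfied by the positive maximizer $w$ gives $-\mathrm{div\,}\phi_0+f_0=-\Delta w/w^{q-1}=1$, so $(f_0,\phi_0)\in\mathcal{A}(\Omega)$ with equality in the constraint. An algebraic simplification shows $G_q(f_0,\phi_0)=|\nabla w|^{2-q}/(q-1)^{q-1}$, whence
\[
(q-1)^{(q-1)\frac{2}{q}}\,\|G_q(f_0,\phi_0)\|_{L^{\frac{2}{2-q}}(\Omega)}^{2/q}=\left(\int_\Omega |\nabla w|^2\,dx\right)^{(2-q)/q}.
\]
Testing the Euler--Lagrange equation against $w$ itself gives $\int_\Omega |\nabla w|^2\,dx=\int_\Omega w^q\,dx$, while the extremality of $w$ for $\lambda_1(\Omega;q)$ yields $\lambda_1(\Omega;q)=(\int_\Omega w^q\,dx)^{(q-2)/q}$; substituting both identities reduces the right-hand side above to $1/\lambda_1(\Omega;q)$, which closes the chain of inequalities and simultaneously identifies $(f_0,\phi_0)$ as a minimizer.

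The principal technical obstacle is making the distributional pairing with $\varphi^q$ rigorous, since $(f,\phi)$ lives only in $L^1_{\rm loc}\times L^2_{\rm loc}$. I would handle this by first restricting to $\varphi$ bounded with compact support in $\Omega$, for which $\varphi^q\in W^{1,2}_0(\Omega)\cap L^\infty(\Omega)$ has compactly supported $L^2$-gradient and the pairings with $f$ and $\phi$ are unambiguously defined; the general case is then recovered through truncation at level $k$ together with a smooth cut-off, passing to the limit by monotone/dominated convergence, which is justified by the assumed global finiteness of $\|G_q(f,\phi)\|_{L^{2/(2-q)}(\Omega)}$ and of $\int_\Omega|\nabla\varphi|^2\,dx$.
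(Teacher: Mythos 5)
Your proposal is correct, and it is a genuine reorganization of the paper's argument rather than a transcription of it. The paper goes through three preparatory pieces: the hidden-convexity reformulation (Proposition~\ref{prop:riformula}), which rewrites $\frac{2-q}{q}\lambda_1(\Omega;q)^{-q/(2-q)}$ as the concave maximization of $\frac{2}{q}\int\psi-\frac{1}{q^2}\int F_q(\psi,\nabla\psi)$ over $\psi\in X_q(\Omega)$; the restriction to $C^1_0$ competitors (Lemma~\ref{lm:riformula}); and the full Legendre--Fenchel computation of $F_q^*$ and its relation to $G_q$ (Lemma~\ref{lm:fenchel}, Remark~\ref{oss:costanteriscala}). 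It then integrates the abstract Fenchel--Young inequality over $\Omega$ and takes the supremum in $\psi$, so no H\"older step is needed --- the $L^{2/(2-q)}$ exponent comes out of the conjugate's homogeneity. Your argument short-circuits all of this: you derive the needed pointwise estimate
\[
q\,t^{q-1}\langle\xi,\eta\rangle+s\,t^q\le (q-1)^{q-1}\,G_q(s,\xi)\,|\eta|^q,\qquad s<0,\ t\ge 0,
\]
by a one-variable maximization plus Cauchy--Schwarz, then test the divergence constraint against $\varphi^q$ directly (this is where the hidden convexity $\psi=\varphi^q$ is implicitly used), and conclude with H\"older against $|\nabla\varphi|^2$. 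The exponents and the constant $(q-1)^{(q-1)2/q}$ come out right, and the optimality verification for $(f_0,\phi_0)$ is the same in both treatments. What you gain is brevity and elementariness: you never need to prove $F_q$ is a convex lower semicontinuous function nor to compute its conjugate (the longest part of Section~\ref{sec:3}). What the paper's route buys is a cleaner conceptual picture: it exhibits the whole problem as a bona-fide convex dual pair, with Proposition~\ref{prop:riformula} serving as the exact analogue of the unconstrained maximization formula \eqref{unconstrained} for torsion, and the Fenchel machinery handling all exponents at once. Two small points you should flesh out if you write this up: (i) the admissibility of $(f_0,\phi_0)$ requires checking that $\phi_0\in L^2_{\rm loc}$ and $f_0\in L^1_{\rm loc}$, which follows from $1/w\in L^\infty_{\rm loc}(\Omega)$ as recorded in Proposition~\ref{lm:base}, and from testing the Euler--Lagrange equation for $w$ against $\psi\,w^{1-q}$ with $\psi\in C^1_0(\Omega)$ nonnegative; (ii) for $\varphi\in C^\infty_0(\Omega)$ nonnegative and $1<q<2$, the function $\varphi^q$ is in fact $C^1_0(\Omega)$ (since $q>1$ gives $\nabla(\varphi^q)=q\varphi^{q-1}\nabla\varphi\to 0$ continuously at zeros of $\varphi$), so the density-of-$C^\infty_0$ argument allows you to bypass the truncation/mollification scheme you sketch, at the modest price of replacing $\varphi$ by $|\varphi|$ before testing.
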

\begin{oss}
Observe that the previous result is perfectly consistent with the case $q=1$. Indeed, by formally taking the limit as $q$ goes to $1$ in the statement above, the role of the dual variable $f$ becomes immaterial and we get back \eqref{dualT}, together with the optimality condition $\phi_0=\nabla w$.
\end{oss}
For the limit case $q=2$, corresponding to the first eigenvalue of the Dirichlet-Laplacian, we have the following dual characterization.
\begin{teo}[Homogeneous case]
\label{teo:homo}
Let $\Omega\subset\mathbb{R}^N$ be an open set, with finite measure. If we set 
\[
\mathcal{A}(\Omega)=\Big\{(f,\phi)\in L^1_{\rm loc}(\Omega)\times L^2_{\rm loc}(\Omega;\mathbb{R}^N)\, :\, -\mathrm{div\,} \phi+f\ge 1 \mbox{ in }\Omega\Big\},
\]
then we have
\begin{equation}
\label{formulab2}
\frac{1}{\lambda_1(\Omega)}=
\inf_{(f,\phi)\in \mathcal{A}(\Omega)}\Big\|G_2(f,\phi)\Big\|_{L^\infty(\Omega)},
\end{equation}
where $G_2$ is defined in \eqref{G}. 
Moreover, if we denote by $U\in W^{1,2}_0(\Omega)$ any positive first eigenfunction of $\Omega$,
we get that the pair $(f_0,\phi_0)$ defined by
\[
\phi_0=\frac{1}{\lambda_1(\Omega)}\,\frac{\nabla U}{U}\qquad \mbox{ and }\qquad f_0=-\frac{1}{\lambda_1(\Omega)}\,\frac{|\nabla U|^2}{U^2},
\]
is a minimizer for the problem in \eqref{formulab2}.
\end{teo}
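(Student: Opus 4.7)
The plan is to mirror the structure used in the sub-homogeneous case, but with the $L^{2/(2-q)}$ norm replaced by the sup-norm. There are two inequalities to establish. For the easy direction, I would verify directly that the candidate pair $(f_0,\phi_0)$ lies in $\mathcal{A}(\Omega)$ and achieves $\|G_2(f_0,\phi_0)\|_{L^\infty(\Omega)}=1/\lambda_1(\Omega)$. Admissibility is a one-line computation using $-\Delta U=\lambda_1(\Omega)\,U$:
\[
-\mathrm{div\,}\phi_0+f_0 = -\frac{1}{\lambda_1(\Omega)}\,\frac{\Delta U}{U}+\frac{1}{\lambda_1(\Omega)}\,\frac{|\nabla U|^2}{U^2}+f_0 = 1,
\]
since the last two terms cancel. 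On the set $\{U>0\}$ (which is $\Omega$, by strict positivity of the first eigenfunction on connected components), the quantity $G_2(f_0,\phi_0)$ equals $|\phi_0|^2/|f_0|=1/\lambda_1(\Omega)$ pointwise, giving $\inf\le 1/\lambda_1(\Omega)$.

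For the reverse inequality, I would take an arbitrary competitor $(f,\phi)\in\mathcal{A}(\Omega)$ with $M:=\|G_2(f,\phi)\|_{L^\infty(\Omega)}<\infty$. Finiteness of $G_2$ forces $f\le 0$ a.e., with $\phi=0$ wherever $f=0$, and yields the pointwise bound $|\phi|^2\le M\,|f|$ a.e. The key step is to test the distributional inequality $-\mathrm{div\,}\phi+f\ge 1$ against the admissible non-negative function $\varphi^2$, for $\varphi\in C^\infty_0(\Omega)$: this gives
\[
\int_\Omega \varphi^2\,dx \le 2\int_\Omega \varphi\,\langle\phi,\nabla\varphi\rangle\,dx+\int_\Omega f\,\varphi^2\,dx.
\]
Setting $a=-f\ge 0$ and applying the weighted Young inequality $2\,X\,Y\le X^2+Y^2$ with $X=|\varphi|\sqrt{a}$ and $Y=|\phi|\,|\nabla\varphi|/\sqrt{a}$ (and treating the degenerate set $\{a=0\}$ trivially, since there $\phi=0$), I obtain the pointwise bound
\[
2\,\varphi\,\langle\phi,\nabla\varphi\rangle+f\,\varphi^2 \le \frac{|\phi|^2}{|f|}\,|\nabla\varphi|^2 \le M\,|\nabla\varphi|^2.
\]
Integrating and inserting into the tested inequality gives the Poincar\'e-type estimate $\int_\Omega \varphi^2\,dx\le M\int_\Omega |\nabla\varphi|^2\,dx$ for every $\varphi\in C^\infty_0(\Omega)$. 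By the density of $C^\infty_0(\Omega)$ in $W^{1,2}_0(\Omega)$ and the variational characterization of $\lambda_1(\Omega)$, this yields $M\ge 1/\lambda_1(\Omega)$, and optimality of $(f_0,\phi_0)$ follows upon noting that each step becomes an equality for the candidate pair when $\varphi=U$.

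The main technical point to be careful about is the legitimacy of the test function $\varphi^2$: since $\phi$ and $f$ are only locally integrable, it is cleanest to first test against $\varphi\in C^\infty_0(\Omega)$, for which $\varphi^2\in C^\infty_0(\Omega)$ is a fully admissible non-negative test function in the distributional definition of the divergence constraint, and then extend to $W^{1,2}_0(\Omega)$ via the Poincar\'e inequality already derived (which makes the extension purely functional-analytic rather than requiring further information on $(f,\phi)$). The rest of the argument is algebraic; the only nontrivial ingredient is the pointwise Young inequality above, which is the $q=2$ analogue of the hidden-convexity mechanism driving the sub-homogeneous case and which saturates exactly at the logarithmic gradient $\nabla U/U$ of the first eigenfunction.
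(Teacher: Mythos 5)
Your proposal is correct, but it follows a genuinely different route from the paper. The paper proves the inequality $\frac{1}{\lambda_1(\Omega)}\le \|G_2(f,\phi)\|_{L^\infty(\Omega)}$ indirectly: it applies Theorem~\ref{teo:sub} on compactly contained subsets $\Omega'\Subset\Omega$, observes that $\|G_q(f,\phi)\|_{L^{2/(2-q)}(\Omega')}^{2/q}\le \|G_2(f,\phi)\|_{L^\infty(\Omega)}\left(\int_{\Omega'}|f|\,dx\right)^{(2-q)/q}$, sends $q\to 2^-$ via Lemma~\ref{lm:limite}, and finally exhausts $\Omega$ by an increasing family $\Omega_n\nearrow\Omega$. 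Your argument, by contrast, is direct and self-contained at $q=2$: you test the constraint against $\varphi^2$, apply the weighted Young inequality $2\varphi\langle\phi,\nabla\varphi\rangle+f\varphi^2\le \frac{|\phi|^2}{|f|}|\nabla\varphi|^2$ pointwise a.e.\ (handling $\{f=0\}$ using $\phi=0$ there), and read off the Poincar\'e bound $\int_\Omega\varphi^2\,dx\le M\int_\Omega|\nabla\varphi|^2\,dx$. This bypasses entirely both the sub-homogeneous Theorem~\ref{teo:sub} and the continuity Lemma~\ref{lm:limite}, and it is in essence a rescaled, rigorous rendering of the classical Protter--Hersch ``maximum principle for $\lambda_1$'' that the paper records as a remark after the statement. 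The trade-off is conceptual: the paper's route exhibits Theorem~\ref{teo:homo} as the $q\to 2$ endpoint of a continuous family of duality formulas and makes the passage $G_q\to G_2$ explicit, whereas yours isolates the $q=2$ case as an independent, shorter argument. One small imprecision worth fixing: $G_2(f_0,\phi_0)$ equals $1/\lambda_1(\Omega)$ only on $\{\nabla U\ne 0\}$ and equals $0$ on $\{\nabla U=0\}$, so the correct pointwise statement is $G_2(f_0,\phi_0)\le 1/\lambda_1(\Omega)$ a.e., with the $L^\infty$-norm still equal to $1/\lambda_1(\Omega)$ since $\{\nabla U\ne 0\}$ has positive measure; the paper phrases it this way.
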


\begin{oss}
It may be worth recalling that the existence of a (sort of) dual formulation for $\lambda_1$ is not a complete novelty. A related result can be traced back in the literature and attributed to the fundamental contributions of Protter and Hersch.
 This is called {\it maximum principle for $\lambda_1$} and reads as follows
\[
\lambda_1(\Omega)=\max_{\phi} \inf_{x\in\Omega} \Big[\mathrm{div}\phi(x)-|\phi(x)|^2\Big],
\]
under suitable regularity assumptions on $\Omega$ and on the admissible vector fields. It is not difficult to see that 
\[
\phi_0(x)=-\frac{\nabla U}{U},
\]
is a maximizer for the previous problem, at least formally. Here $U$ is again any positive first eigenfunction of $\Omega$.
We refer to the paper \cite{He1} by Hersch for a presentation of this result and for a detailed discussion about its physical interpretation.
\end{oss}

\begin{oss}
As a last observation, we wish to point out the interesting papers \cite{FL} and \cite{FS}, where yet another equivalent characterization for the torsional rigidity $T(\Omega)$ is obtained, when $\Omega\subset\mathbb{R}^2$ is a simply connected open set. Such a characterization is in terms of a minimization problem among holomorphic functions (see \cite[Theorem 1.2]{FL}) and thus it is suitable for giving upper bounds on $T(\Omega)$ (see \cite{FS}).
\end{oss}

\subsection{Plan of the paper}
We start by exposing some preliminary facts in Section \ref{sec:2}. In Section \ref{sec:3} we consider a certain convex function and show that its Legendre-Fenchel transform is related to the function $G_q$ above. The core of the paper is Section \ref{sec:4}, where Proposition \ref{prop:riformula} permits to rewrite the value $\lambda_1(\Omega;q)$ as an unconstrained {\it concave} maximization problem, exactly as in the case of the torsional rigidity. We can then prove our main results in Section \ref{sec:5}. Finally, in the last section we briefly show some applications of our results to geometric estimates for principal frequencies.

\begin{ack}
We wish to thank Francesco Maggi for first introducing us to the hidden convexity of the Dirichlet integral. We also thank Rafael Benguria, Guillaume Carlier and Eleonora Cinti for some comments on a preliminary version of the paper. 
\end{ack}

\section{Preliminaries}
\label{sec:2}

We first recall that it is possible to rewrite the minimization problem which defines $\lambda_1(\Omega;q)$ as an unconstrained optimization problem, in the regime $1\le q<2$. This generalizes formula \eqref{unconstrained}.
The proof is standard, we include it for completeness.
\begin{prop}
\label{lm:base}
Let $1\le q<2$ and let $\Omega\subset\mathbb{R}^N$ be an open set, with finite measure. Then we have
\[
\max_{\varphi\in W^{1,2}_0(\Omega)}\left\{\frac{2}{q}\,\int_\Omega |\varphi|^q\,dx-\int_\Omega |\nabla \varphi|^2\,dx\right\}=\frac{2-q}{q}\,\left(\frac{1}{\lambda_1(\Omega;q)}\right)^{\frac{q}{2-q}}.
\]
Moreover, the maximization problem on the left-hand side admits a unique non-negative solution $w$, which has the following properties
\[
w\in L^\infty(\Omega) \qquad \mbox{ and }\qquad \frac{1}{w}\in L^\infty_{\rm loc}(\Omega).
\] 
\end{prop}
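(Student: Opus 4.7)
The plan is first to recast the unconstrained maximum as an optimization over the Rayleigh quotient via a one-parameter rescaling, and then to produce and analyze an explicit non-negative maximizer.

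Since $|\varphi| \in W^{1,2}_0(\Omega)$ with $|\nabla |\varphi|| = |\nabla \varphi|$ almost everywhere (by Stampacchia's truncation theorem), the functional does not decrease when $\varphi$ is replaced by $|\varphi|$, so we may restrict the supremum to $\varphi \ge 0$. Fix such a $\varphi \not\equiv 0$ and consider the one-variable function
\[
g(t) = \frac{2}{q}\, t^q \int_\Omega \varphi^q \, dx - t^2 \int_\Omega |\nabla \varphi|^2 \, dx, \qquad t>0.
\]
Since $q<2$, $g$ is strictly concave on $(0,\infty)$ after the change of variable $s = t^{2-q}$; its critical point satisfies
\[
t^{2-q} = \frac{\int_\Omega \varphi^q \, dx}{\int_\Omega |\nabla \varphi|^2 \, dx}.
\]
Substituting back gives
\[
\max_{t>0} g(t) = \frac{2-q}{q}\,\left(\frac{\left(\int_\Omega \varphi^q\,dx\right)^{2/q}}{\int_\Omega |\nabla \varphi|^2\,dx}\right)^{q/(2-q)}.
\]
Taking the supremum over admissible $\varphi$ and invoking the very definition of $\lambda_1(\Omega;q)$ yields the claimed identity.

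For existence of a maximizer, one takes any minimizer $u \ge 0$ of the Rayleigh quotient (whose existence is guaranteed by the compactness $W^{1,2}_0(\Omega) \hookrightarrow L^q(\Omega)$), and then sets $w = t_\star u$ with $t_\star$ the critical value obtained above; by the scaling computation, $w$ realizes the supremum. For uniqueness, I would write the Euler--Lagrange equation of $w$:
\[
-\Delta w = w^{q-1} \quad \text{in } \Omega, \qquad w \in W^{1,2}_0(\Omega),\ w \ge 0.
\]
Because $q-1 < 1$, this is a sub-homogeneous semilinear problem, and uniqueness of the non-negative (non-trivial) weak solution is the classical Brezis--Oswald theorem; equivalently, it follows from a Picone-type argument, which is the genuine incarnation of the hidden convexity that the paper later exploits. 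The strong maximum principle then upgrades $w \ge 0$ to $w > 0$ pointwise in $\Omega$.

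For the two regularity statements, observe that the nonlinearity $w \mapsto w^{q-1}$ is sub-linear (because $q-1<1$), so a standard Moser iteration applied to $-\Delta w = w^{q-1}$ gives $w \in L^\infty(\Omega)$; alternatively, one may bootstrap the Sobolev embedding a finite number of times. The lower bound $1/w \in L^\infty_{\rm loc}(\Omega)$ then follows from the positivity $w>0$ combined with the classical elliptic Harnack inequality on compact subsets of $\Omega$. The main obstacle is clearly the uniqueness step: the difference-of-convex functional is not concave, so uniqueness must be extracted from the hidden convex structure of the PDE rather than from the variational problem itself; all the remaining steps are either routine algebra or standard elliptic regularity.
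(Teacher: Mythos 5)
Your proposal is correct and follows essentially the same route as the paper: reduction to non-negative competitors, the scaling substitution $t\varphi$ to turn the unconstrained maximum into the Rayleigh quotient, the Euler--Lagrange equation $-\Delta w = w^{q-1}$, uniqueness via Brezis--Oswald, elliptic regularity for $w\in L^\infty(\Omega)$, and a minimum-principle/Harnack argument for $1/w\in L^\infty_{\rm loc}(\Omega)$. The only cosmetic difference is that you obtain the maximizer by rescaling a Rayleigh-quotient minimizer, whereas the paper invokes the Direct Method and then carries out the same scaling computation to identify the extremal value; both are legitimate and effectively interchangeable.
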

\begin{proof}
Existence of a maximizer follows by a standard application of the Direct Method in the Calculus of Variations. The fact that a non-negative maximizer exists is a consequence of the fact that the functional is even, thus we can always replace $\varphi$ by $|\varphi|$ without decreasing the energy.
\par
We also observe that for $\varphi\in W^{1,2}_0(\Omega)\setminus\{0\}$ and $t>0$, the quantity
\[
\frac{2}{q}\,t^q\,\int_\Omega |\varphi|^q\,dx-t^2\,\int_\Omega |\nabla \varphi|^2\,dx,
\]
is strictly positive for $t$ sufficiently small. This shows that
\[
\max_{\varphi\in W^{1,2}_0(\Omega)}\left\{\frac{2}{q}\,\int_\Omega |\varphi|^q\,dx-\int_\Omega |\nabla \varphi|^2\,dx\right\}>0,
\]
and thus $\varphi\equiv 0$ can not be a maximizer. Observe that the same argument, together with the locality of the functional, imply that for any maximizer $w$ we must have $w\not\equiv0$ on every connected component of $\Omega$.
By coupling this information with the optimality condition, we get that any non-negative maximizer $w$ must be a nontrivial weak solution of the Euler-Lagrange equation
\[
-\Delta w=w^{q-1},\qquad \mbox{ in }\Omega.
\]
In particular, $w$ is a weakly superharmonic function and
by the minimum principle, we get that $1/w\in L^\infty_{\rm loc}(\Omega)$. The fact that $w\in L^\infty(\Omega)$ follows from standard Ellipic Regularity. 
\par
Finally, uniqueness of the positive maximizer can be found in \cite[Lemma 2.2]{BFR}, where the uniqueness result of \cite[Theorem 1]{BO} is extended to the case of open sets, not necessarily smooth.
\vskip.2cm\noindent
In order to prove the claimed equality between the extremum values, it is sufficient to exploit the different homogeneities of the two integrals and the fact that the maximum problem is equivalently settled on $W^{1,2}_0(\Omega)\setminus\{0\}$. We then have 
\[
\begin{split}
\max_{\varphi\in W^{1,2}_0(\Omega)}\left\{\frac{2}{q}\,\int_\Omega |\varphi|^q\,dx-\int_\Omega |\nabla \varphi|^2\,dx\right\}=\max_{\varphi\in W^{1,2}_0(\Omega)\setminus\{0\}, t>0}\left\{\frac{2}{q}\,t^q\,\int_\Omega |\varphi|^q\,dx-t^2\,\int_\Omega |\nabla \varphi|^2\,dx\right\}.
\end{split}
\]
It is easily seen that, for every $\varphi\in W^{1,2}_0(\Omega)\setminus\{0\}$ the function
\[
t\mapsto \frac{2}{q}\,t^q\,\int_\Omega |\varphi|^q\,dx-t^2\,\int_\Omega |\nabla \varphi|^2\,dx,
\]
is maximal for 
\[
t_0=\left(\frac{\displaystyle\int_\Omega |\varphi|^q\,dx}{\displaystyle\int_\Omega |\nabla \varphi|^2\,dx}\right)^\frac{1}{2-q}.
\]
With such a choice of $t$, we get
\[
\frac{2}{q}\,t_0^q\,\int_\Omega |\varphi|^q\,dx-t_0^2\,\int_\Omega |\nabla \varphi|^2\,dx=\left(\frac{\displaystyle\left(\int_\Omega |\varphi|^q\,dx\right)^\frac{2}{q}}{\displaystyle\int_\Omega |\nabla \varphi|^2\,dx}\right)^\frac{q}{2-q}\, \frac{2-q}{q}.
\]
By recalling the definition of $\lambda_1(\Omega;q)$, we get the desired conclusion.
\end{proof}
We also record the following technical result: this will be useful somewhere in the paper. More sophisticated results about the dependence on $q$ of the quantity $\lambda_1(\Omega;q)$ can be found in \cite[Theorem 1]{AFI} and \cite{Er}. %We kindly draw the attention of the reader on the fact that we are taking minimal assumptions on the open set: thus some care is needed.
\begin{lm}
\label{lm:limite}
Let $\Omega\subset\mathbb{R}^N$ be an open set, with finite measure. Then we have
\[
\lim_{q\to 1^+} \lambda_1(\Omega;q)=\lambda_1(\Omega;1)\qquad \mbox{ and }\qquad \lim_{q\to 2^-} \lambda_1(\Omega;q)=\lambda_1(\Omega).
\]
%Moreover, if there exists $1\le q_0<2$ such that $\lambda_1(\Omega;q_0)>0$, then we have
%\[
%\lim_{q\to 2^-} \lambda_1(\Omega;q)=\lambda_1(\Omega),
%\]
%as well.
\end{lm}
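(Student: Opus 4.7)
The plan is to establish both limits at $q_0\in\{1,2\}$ by proving $\limsup$ and $\liminf$ inequalities separately, using the variational definition of $\lambda_1(\Omega;q)$ together with the compact embedding $W^{1,2}_0(\Omega)\hookrightarrow L^2(\Omega)$ (available since $|\Omega|<\infty$).

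For the upper bound, I would fix a minimizer $u_0\in W^{1,2}_0(\Omega)$ of $\lambda_1(\Omega;q_0)$ and use it as a competitor for every $\lambda_1(\Omega;q)$ with $q$ close to $q_0$. Since $u_0\in L^2(\Omega)$ and $\Omega$ has finite measure, the pointwise convergence $|u_0|^q\to |u_0|^{q_0}$ together with the domination $|u_0|^q\le 1+|u_0|^2\in L^1(\Omega)$ gives, by dominated convergence, $\|u_0\|_{L^q(\Omega)}\to \|u_0\|_{L^{q_0}(\Omega)}$. Plugging $u_0$ into the Rayleigh quotient then yields
\[
\limsup_{q\to q_0}\lambda_1(\Omega;q)\le \frac{\|\nabla u_0\|_{L^2(\Omega)}^2}{\|u_0\|_{L^{q_0}(\Omega)}^2}=\lambda_1(\Omega;q_0).
\]

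For the lower bound, I would pick a sequence $q_k\to q_0$ and, for each $k$, a minimizer $u_k\in W^{1,2}_0(\Omega)$ of $\lambda_1(\Omega;q_k)$ normalized by $\|u_k\|_{L^{q_k}(\Omega)}=1$; then $\|\nabla u_k\|_{L^2(\Omega)}^2=\lambda_1(\Omega;q_k)$, which is bounded uniformly in $k$ by the upper bound just established. Extracting subsequences, $u_k\rightharpoonup u$ weakly in $W^{1,2}_0(\Omega)$, strongly in $L^2(\Omega)$, and almost everywhere in $\Omega$, and there exists $h\in L^2(\Omega)$ with $|u_k|\le h$ a.e. Weak lower semicontinuity of the Dirichlet integral gives $\|\nabla u\|_{L^2(\Omega)}^2\le \liminf_k \lambda_1(\Omega;q_k)$.

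The key technical step is to pass to the limit in the normalization $\int_\Omega |u_k|^{q_k}\,dx=1$, where both the integrand and the exponent vary with $k$. The pointwise limit is $|u|^{q_0}$, and the uniform bound $|u_k|^{q_k}\le 1+|u_k|^2\le 1+h^2\in L^1(\Omega)$ allows dominated convergence, yielding $\int_\Omega |u|^{q_0}\,dx=1$. In particular $u\not\equiv 0$ and $u$ is a competitor for $\lambda_1(\Omega;q_0)$, so
\[
\lambda_1(\Omega;q_0)\le \|\nabla u\|_{L^2(\Omega)}^2\le \liminf_{k\to\infty}\lambda_1(\Omega;q_k),
\]
which combined with the $\limsup$ bound gives both limits. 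The only delicate point I anticipate is precisely this variable-exponent passage, but the domination by $1+|u_k|^2$ and strong $L^2$ convergence reduce it to a routine application of the dominated convergence theorem.
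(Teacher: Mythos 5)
Your proof is correct, but it takes a genuinely different route from the one in the paper. The paper's argument for $q\to 2^-$ is a direct two-sided squeeze, using only elementary estimates: H\"older's inequality on the finite-measure set $\Omega$ gives
\[
\lambda_1(\Omega;q)\ge |\Omega|^{1-\frac{2}{q}}\,\lambda_1(\Omega),\qquad 1< q< 2,
\]
while plugging a fixed first eigenfunction $U$ into the $L^q$ Rayleigh quotient gives
\[
\lambda_1(\Omega;q)\le \frac{\|U\|_{L^2(\Omega)}^2}{\|U\|_{L^q(\Omega)}^2}\,\lambda_1(\Omega);
\]
letting $q\to 2^-$ in both displays (with a dominated convergence argument on the second) immediately yields the limit, with no compactness and no extraction of subsequences. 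Your proof shares the test-function argument for the $\limsup$ direction, but replaces the H\"older squeeze for the $\liminf$ direction by the Direct Method: normalize minimizers $u_k$ with $\|u_k\|_{L^{q_k}}=1$, extract a weak $W^{1,2}_0$ and strong $L^2$ limit $u$ (together with an $L^2$ dominating function), use weak lower semicontinuity of the Dirichlet energy, and pass to the limit in the variable-exponent normalization by dominated convergence. This is heavier machinery, but it has a real payoff: your argument treats the two endpoints $q\to 1^+$ and $q\to 2^-$ completely symmetrically, whereas H\"older's inequality only points in the useful direction near $q=2$ (comparing $\|\varphi\|_{L^q}$ with $\|\varphi\|_{L^2}$); near $q=1$ the analogous comparison with $\|\varphi\|_{L^1}$ gives an \emph{upper} bound on $\lambda_1(\Omega;q)$, so the paper's remark that the $q\to 1^+$ case follows ``exactly the same way'' conceals an asymmetry. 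In that sense your compactness argument is a correct and arguably more robust alternative, at the price of invoking the compact embedding and lower semicontinuity rather than a bare inequality.
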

\begin{proof}
\begin{comment}
We prove separately the statements for $q\to 1^+$ and $q\to 2^-$.
\vskip.2cm\noindent
{\it Case $q\to 1^+$}. For every $\varphi\in C^\infty_0(\Omega)\setminus\{0\}$, if we indicate by $K_\varphi$ the support of $\varphi$, we have by H\"older's inequality
\[
\frac{\displaystyle\int_\Omega |\nabla \varphi|^2\,dx}{\displaystyle\left(\int_\Omega |\varphi|\,dx\right)^2}\ge |K_\varphi|^{\frac{2}{q}-2}\, \frac{\displaystyle\int_\Omega |\nabla \varphi|^2\,dx}{\displaystyle\left(\int_\Omega |\varphi|^q\,dx\right)^\frac{2}{q}}\ge|K_\varphi|^{\frac{2}{q}-2}\, \lambda_1(\Omega;q).
\]
By taking the limit as $q$ goes to $1$, we get 
\[
\frac{\displaystyle\int_\Omega |\nabla \varphi|^2\,dx}{\displaystyle\left(\int_\Omega |\varphi|\,dx\right)^2}\ge \limsup_{q\to 1^+}\lambda_1(\Omega;q),\qquad \mbox{ for every } \varphi\in C^\infty_0(\Omega)\setminus\{0\}.
\]
By taking the infimum over $\varphi$ and using that $C^\infty_0(\Omega)$ is dense in $W^{1,2}_0(\Omega)$, we get 
\[
\inf_{\varphi\in C^\infty_0(\Omega)\setminus\{0\}} \frac{\displaystyle\int_\Omega |\nabla \varphi|^2\,dx}{\displaystyle\left(\int_\Omega |\varphi|\,dx\right)^2}\ge \limsup_{q\to 1^+} \lambda_1(\Omega;q).
\]
If the infimum on the left-hand is zero, thus we get that $\lambda_1(\Omega;1)=0$, as well. If the minimum is not zero, then this means that there exists a constant $C>0$ such that
\[
\left(\int_\Omega |\varphi|\,dx\right)^2\le C\,\int_\Omega |\nabla \varphi|^2\,dx
\]
\vskip.2cm\noindent
{\it Case $q\to 2^-$}. 
\end{comment}
For every $\varphi\in W^{1,2}_0(\Omega)\setminus\{0\}$, we have by H\"older's inequality
\[
\frac{\displaystyle\int_\Omega |\nabla \varphi|^2\,dx}{\displaystyle\left(\int_\Omega |\varphi|^q\,dx\right)^\frac{2}{q}}\ge |\Omega|^{1-\frac{2}{q}}\, \frac{\displaystyle\int_\Omega |\nabla \varphi|^2\,dx}{\displaystyle\int_\Omega |\varphi|^2\,dx}\ge |\Omega|^{1-\frac{2}{q}}\,\lambda_1(\Omega).
\]
By taking the infimum over $\varphi$, this leads to 
\[
\lambda_1(\Omega;q)\ge |\Omega|^{1-\frac{2}{q}}\,\lambda_1(\Omega).
\]
On the other hand, if $U\in W^{1,2}_0(\Omega)$ is any minimizer for $\lambda_1(\Omega)$, then we have 
\[
\lambda_1(\Omega;q)\le \frac{\displaystyle\int_\Omega |\nabla U|^2\,dx}{\displaystyle\left(\int_\Omega |U|^q\,dx\right)^\frac{2}{q}}=\frac{\displaystyle\int_\Omega |U|^2\,dx}{\displaystyle\left(\int_\Omega |U|^q\,dx\right)^\frac{2}{q}}\,\lambda_1(\Omega).
\]
The last two displays eventually prove the desired result for $q$ converging to $2$. The other result can be proved in exactly the same way.
\end{proof}
\begin{oss}
The assumption on the finiteness of the measure is sufficient, but in general not necessary, for the previous result to hold. However, as observed in \cite[Remark 2.2]{Br}, for a general open set $\Omega\subset\mathbb{R}^N$ it may happen that 
\[
\limsup_{q\to 2^-} \lambda_1(\Omega;q)<\lambda_1(\Omega).
\]
%The sufficient condition $\lambda_1(\Omega;q_0)>0$ is certainly satisfied if $\Omega\subset\mathbb{R}^N$ in a open set with finite measure.
\end{oss}

\section{A convex function}
\label{sec:3}

In order to prove the main result of this paper, we will need to study a particular convex function $F_q:\mathbb{R}\times\mathbb{R}^{N}\to[0,+\infty]$ and its {\it Legendre-Fenchel transform}
\[
F_q^*(s,\xi)=\sup_{(t,x)\in\mathbb{R}\times \mathbb{R}^N} \Big[s\,t+\langle \xi,x\rangle-F_q(t,x)\Big].
\] 
We refer to the classical monograph \cite{Ro} for the basic properties of this transform.
\begin{lm}
\label{lm:fenchel}
Let $1<q<2$, we consider the convex lower semicontinuous function $F:\mathbb{R}\times \mathbb{R}^N\to \mathbb{R}\cup \{+\infty\}$ defined by
\[
F_q(t,x)=\left\{\begin{array}{rl}
|x|^2\,t^{\frac{2}{q}-2},& \mbox{ if } x\in\mathbb{R}^N,\, t>0,\\
0, & \mbox{ if } t=0,\, x=0,\\
+\infty,& \mbox{ otherwise}. 
\end{array}
\right.
\]
Then its Legendre-Fenchel transform is given by the convex lower semicontinuous function
\[
F^*_q(s,\xi)=\left\{\begin{array}{rl}
\alpha_q\,|\xi|^\frac{2\,q}{2-q}\,|s|^\frac{2\,(1-q)}{2-q},& \mbox{ if } \xi\in\mathbb{R}^N,\, s<0,\\
0, & \mbox{ if } s=0,\, \xi=0,\\
+\infty,& \mbox{ otherwise},
\end{array}
\right.
\]
where the constant $\alpha_q$ is given by
\[
\alpha_q=\frac{2-q}{2\,q}\,\left(\frac{q-1}{q}\right)^\frac{2\,(q-1)}{2-q}\,\left(\frac{1}{2}\right)^\frac{q}{2-q}.
\]
\end{lm}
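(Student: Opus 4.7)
The plan is to compute $F_q^*$ directly from the definition by optimizing first in the $x$-variable and then in the $t$-variable, and afterwards match the resulting constant to $\alpha_q$. Convexity and lower semicontinuity of $F_q^*$ will come for free, since a Legendre--Fenchel transform is always a supremum of affine functions, hence convex and lower semicontinuous.

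First I would observe that the domain where $F_q$ is finite is $\{t>0\}\cup\{(0,0)\}$, so
\[
F_q^*(s,\xi)=\max\Bigl\{0,\;\sup_{t>0,\,x\in\mathbb{R}^N}\bigl[\,s\,t+\langle \xi,x\rangle-|x|^2\,t^{\frac{2}{q}-2}\,\bigr]\Bigr\}.
\]
For each fixed $t>0$, the inner map $x\mapsto \langle\xi,x\rangle-|x|^2\,t^{\frac{2}{q}-2}$ is a strictly concave quadratic in $x$, maximized at $x_t=\tfrac{1}{2}\,\xi\,t^{2-\frac{2}{q}}$, and a direct substitution gives
\[
\sup_{x\in\mathbb{R}^N}\bigl[\langle\xi,x\rangle-|x|^2\,t^{\frac{2}{q}-2}\bigr]=\frac{|\xi|^2}{4}\,t^{2-\frac{2}{q}}.
\]

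The problem thus reduces to the one-dimensional optimization
\[
\sup_{t>0}\Phi(t),\qquad \Phi(t):=s\,t+\frac{|\xi|^2}{4}\,t^{2-\frac{2}{q}}.
\]
Since $0<2-2/q<1$ for $1<q<2$, I would first dispose of the non-interesting cases: if $(s,\xi)\neq(0,0)$ and $s\ge0$, then $\Phi(t)\to+\infty$, which gives $F_q^*=+\infty$; and the case $(s,\xi)=(0,0)$ gives $F_q^*=0$. In the remaining case $s<0$ (with any $\xi$), $\Phi$ is concave on $(0,+\infty)$, the derivative $\Phi'$ decreases continuously from $+\infty$ to $s<0$ (when $\xi\neq 0$), so there is a unique critical point $t_0>0$ solving
\[
|s|=\frac{|\xi|^2}{4}\cdot\frac{2(q-1)}{q}\,t_0^{\frac{q-2}{q}},
\]
which yields $t_0=\bigl(\tfrac{(q-1)|\xi|^2}{2q|s|}\bigr)^{q/(2-q)}$. (If $\xi=0$ then trivially $\sup\Phi=0$, consistent with the formula.)

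Using the critical-point relation to eliminate the $|\xi|^2$ factor in $\Phi(t_0)$, the maximum collapses to
\[
\Phi(t_0)=|s|\,t_0\,\frac{2-q}{2(q-1)},
\]
and substituting the explicit $t_0$ gives
\[
F_q^*(s,\xi)=\frac{2-q}{2(q-1)}\left(\frac{q-1}{2q}\right)^{\frac{q}{2-q}}|s|^{\frac{2(1-q)}{2-q}}|\xi|^{\frac{2q}{2-q}}.
\]
The only remaining step is the algebraic identity
\[
\frac{2-q}{2(q-1)}\left(\frac{q-1}{2q}\right)^{\frac{q}{2-q}}=\frac{2-q}{2q}\left(\frac{q-1}{q}\right)^{\frac{2(q-1)}{2-q}}\left(\frac{1}{2}\right)^{\frac{q}{2-q}}=\alpha_q,
\]
which I would verify by splitting the factor $(q-1)^{q/(2-q)}/(q-1)$ as $(q-1)^{2(q-1)/(2-q)}$ (using $q/(2-q)-1=2(q-1)/(2-q)$) and a similar regrouping of the $q$-powers via $1+2(q-1)/(2-q)=q/(2-q)$. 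This bookkeeping of exponents is the only mildly delicate part; the rest of the argument is essentially a one-variable concave optimization.
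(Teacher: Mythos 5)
Your computation of the Legendre--Fenchel transform is correct, and it is in fact more direct than the paper's. The paper first reduces to $|x|=m\ge 0$ by Cauchy--Schwarz, then exploits the positive $2/q$-homogeneity of $F_q$ (hence $2/(2-q)$-homogeneity of $F_q^*$) to reduce the computation to the four base points $(1,\xi),(0,\xi),(0,0),(-1,\xi)$, and for $(-1,\xi)$ it performs a two-step scaling argument, first optimizing over a dilation parameter $\lambda>0$ and then over the ratio $\tau=t/m$. You instead eliminate $x$ by a single quadratic optimization, reducing immediately to $\Phi(t)=s\,t+\tfrac{|\xi|^2}{4}t^{2-\frac{2}{q}}$ on $(0,+\infty)$, and then handle a single concave one-variable problem; the case discussion for $s\ge 0$ versus $s<0$ is handled cleanly by the sign of $2-2/q\in(0,1)$. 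I checked the critical point $t_0=\bigl(\tfrac{(q-1)|\xi|^2}{2q|s|}\bigr)^{q/(2-q)}$, the collapse $\Phi(t_0)=|s|\,t_0\,\tfrac{2-q}{2(q-1)}$, and the exponent bookkeeping reconciling your constant with $\alpha_q$: all correct.

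There is, however, one gap relative to the statement as written: the lemma asserts that $F_q$ itself is convex and lower semicontinuous, and the paper devotes two separate steps to proving this. Lower semicontinuity is not automatic because of the behavior near the boundary point $t=0$: one must check that sequences $(t_n,x_n)$ with $t_n\downarrow 0$, $x_n\to x\neq 0$, and $F_q(t_n,x_n)$ bounded cannot occur, which the paper does by noting that the sublevel constraint $|x_n|^2\le s_n\,t_n^{2-\frac{2}{q}}$ forces $x_n\to 0$. Convexity of $F_q$ is also not obvious; the paper derives it from the identity $F_q(t,x)=F_2\bigl(t^{2-\frac{2}{q}},x\bigr)$ together with the convexity and monotone decrease of $(t,x)\mapsto |x|^2/t$ and the concavity of $t\mapsto t^{2-\frac{2}{q}}$ for $1<q<2$. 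Your observation that $F_q^*$ is automatically convex and lsc (as a sup of affine functions) is correct but addresses only the dual, not the primal. The convexity of $F_q$ is used later in the paper (e.g.\ to show that $X_q(\Omega)$ is convex in Proposition 4.1), so this part of the lemma cannot simply be dropped; you should supply it.
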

\begin{proof}
We divide the proof in various parts, according to the claim that we are going to prove. 
\vskip.2cm\noindent
{\bf Lower semicontinuity.} In order to verify the semicontinuity of $F_q$, we need to prove that the epigraph
\[
\mathrm{epi\,}(F_q)=\Big\{((t,x);s)\in \mathbb{R}^{N+1}\times\mathbb{R}\, :\, F_q(t,x)\le \lambda\Big\},
\] 
is a closed set. We take $\{((t_n,x_n);s_n)\}_{n\in\mathbb{N}}\subset \mathrm{epi\,}(F_q)$ such that
\[
\lim_{n\to\infty} t_n=t,\qquad \lim_{n\to\infty} x_n=x,\qquad \lim_{n\to\infty} s_n=s.
\]
By using the definition of $F_q$ and that of epigraph, the fact that
\begin{equation}
\label{sottolivello}
F_q(t_n,x_n)\le s_n,
\end{equation}
automatically entails that
\[
\{(t_n,x_n)\}_{n\in\mathbb{N}}\subset \Big((0,+\infty)\times\mathbb{R}^N \Big)\cup \{(0,0)\}.
\]
This in particular implies that the limit point $t$ is such that $t\ge 0$. The same can be said for $s$, since $F_q$ always assume positive values. 
\par
We now observe that if $t>0$, we would have $t_n>0$ for $n$ large enough. In this case, we can simply pass to the limit in \eqref{sottolivello} and get
\[
F_q(t,x)=|x|^2\,t^{\frac{2}{q}-2}=\lim_{n\to\infty} |x_n|^2\,t_n^{\frac{2}{q}-2}\le \lim_{n\to\infty} s_n=s,
\] 
thus proving that $((t,x);s)\in \mathrm{epi\,}(F_q)$.
\par
Let us now suppose that $t=0$ and assume by contradiction that $((0,x);s)\not \in \mathrm{epi\,}(F_q)$. This means that
\[
F_q(0,x)>s.
\]
By recalling that $s\ge 0$ and that $F_q(0,0)=0$, this would automatically gives that $x\not =0$. On the other hand, by \eqref{sottolivello}, we get that 
\[
\mbox{ either }\qquad t_n=0 \mbox{ and } x_n=0\qquad \mbox{ or }\qquad t_n>0\mbox{ and } |x_n|^2\le s_n\,t_n^{2-\frac{2}{q}}.
\]
This entails that
\[
x=\lim_{n\to\infty} x_n=0,
\]
which gives a contradiction. This finally proves that the epigraph is closed.
\vskip.2cm\noindent
{\bf Convexity.} We need to prove that for every $t_0,t_1\in\mathbb{R}$, $x_0,x_1\in\mathbb{R}^N$ and $\lambda\in [0,1]$, we have
\begin{equation}
\label{convexity}
F_q(\lambda\,t_0+(1-\lambda)\,t_1,\lambda\,x_0+(1-\lambda)\,x_1)\le \lambda\,F_q(t_0,x_0)+(1-\lambda)\,F_q(t_1,x_1).
\end{equation}
We observe that for $t_0,t_1\le 0$, every $x_0,x_1\in\mathbb{R}^N\setminus\{0\}$ and every $\lambda\in[0,1]$, we trivially have \eqref{convexity}, since both terms on the right-hand side are equal to $+\infty$. We are thus confined to prove \eqref{convexity} for 
\[
(t_0,x_0),(t_1,x_1)\in \Big((0,+\infty)\times\mathbb{R}^N \Big)\cup \{(0,0)\}.
\] 
Moreover, if at least one between $(t_0,x_0)$ and $(t_1,x_1)$ coincides with $(0,0)$, then again the desired inequality follows by a straighforward computation. Finally, we can assume that  
\[
(t_0,x_0),(t_1,x_1)\in (0,+\infty)\times\mathbb{R}^N.
\]
We introduce the function 
\[
F_2(t,x)=|x|^2\,t^{-1},\qquad \mbox{ if } x\in\mathbb{R}^N,\, t>0,
\]
and we observe that for every $1<q<2$ we have
\[
F_q(t,x)=F_2\left(t^{2-\frac{2}{q}},x\right),\qquad \mbox{ for } (t,x)\in (0,+\infty)\times\mathbb{R}^N.
\]
By using that $t\mapsto F_2(t,x)$ is decreasing, that $t\mapsto t^{2-\frac{2}{q}}$ is concave (since $1<q<2$) and that $(t,x)\mapsto F_2(t,x)$ is convex (see for example \cite[Lemma 5.17]{OTAM}), we get
\[
\begin{split}
F_q(\lambda\,t_0+(1-\lambda)\,t_1,\lambda\,x_0+(1-\lambda)\,x_1)&=F_2\left((\lambda\,t_0+(1-\lambda)\,t_1)^{2-\frac{2}{q}},\lambda\,x_0+(1-\lambda)\,x_1\right)\\
&\le F_2\left(\lambda\,t_0^{2-\frac{2}{q}}+(1-\lambda)\,t_1^{2-\frac{2}{q}},\lambda\,x_0+(1-\lambda)\,x_1\right)\\
&\le \lambda\,F_2\left(t_0^{2-\frac{2}{q}},x_0\right)+(1-\lambda)\,F_2\left(t_1^{2-\frac{2}{q}},x_1\right)\\
&=\lambda\,F_q(t_0,x_0)+(1-\lambda)\,F_q(t_1,x_1),
\end{split}
\]
as desired.
\vskip.2cm\noindent
{\bf Computation of $F^*_q$.} We now come to the computation of the Legendre-Fenchel transform. This is lengthy but elementary. We first observe that $F_q$ is positively $2/q-$homogeneous, that is for every $\tau>0$ we have
\[
F_q(\tau\,t,\tau\,x)=\tau^\frac{2}{q}\,F_q(t,x),\qquad \mbox{ for every } (t,x)\in \mathbb{R}\times\mathbb{R}^N.
\]
Correspondingly, $F^*_q$ will be positively $2/(2-q)-$homogeneous, by standard properties of the Legendre-Fenchel transform. Thanks to this remark, it is sufficient to compute for $\xi\in\mathbb{R}^N\setminus\{0\}$
\[
F^*_q(-1,\xi),\qquad F^*_q(0,0),\qquad F^*_q(0,\xi)\qquad\mbox{ and }\qquad F^*_q(1,\xi).
\]
By definition, we have 
\[
\begin{split}
F^*_q(s,\xi)&=\sup_{(t,x)\in\mathbb{R}\times\mathbb{R}^N} \Big[t\,s+\langle x,\xi\rangle-F_q(t,x)\Big]\\
&=\sup_{t\ge 0,\,x\in\mathbb{R}^N} \Big[t\,s+|x|\,|\xi|-F_q(t,x)\Big]\\
&=\sup_{(t,m)\in \mathcal{E}} \left[t\,s+m\,|\xi|-m^2\,t^{\frac{2}{q}-2}\right],
\end{split}
\]
where we set\footnote{For notational simplicity, we use the convention that $m^2\,t^{\frac{2}{q}-2}=0$ when both $t=0$ and $m=0$.} 
\[
\mathcal{E}=\Big\{(t,m)\in \mathbb{R}\times\mathbb{R}\, :\, t>0,\, m\ge 0\Big\}\cup \{(0,0)\}.
\]
We thus easily get for $\xi\in\mathbb{R}^N$
\[
F^*_q(1,\xi)=\sup_{(t,m)\in \mathcal{E}} \left[t+m\,|\xi|-m^2\,t^{\frac{2}{q}-2}\right]=+\infty,
\]
and
\[
F^*_q(0,0)=\sup_{(t,m)\in \mathcal{E}} \left[-m^2\,t^{\frac{2}{q}-2}\right]=0.
\]
Moreover, for $\xi\not =0$ we have
\[
F^*_q(0,\xi)=\sup_{(t,m)\in \mathcal{E}} \left[m\,|\xi|-m^2\,t^{\frac{2}{q}-2}\right]=+\infty,
\]
as can be seen by taking
\[
t=n\in\mathbb{N}\qquad \mbox{ and }\qquad m=\frac{1}{2}\,|\xi|\,n^{2-\frac{2}{q}}, 
\]
and letting $n$ go to $+\infty$.
We are left with computing for $\xi\in\mathbb{R}^N\setminus\{0\}$ 
\[
F^*_q(-1,\xi)=\sup_{(t,m)\in \mathcal{E}} \left[-t+m\,|\xi|-\frac{1}{2}\,m^2\,t^{\frac{2}{q}-2}\right].
\]
We make a preliminary observation: if we take $(t,m)\in\mathcal{E}$ such that 
\[
m>0\qquad \mbox{ and }\qquad t=\frac{|\xi|}{2}\,m,
\]
we get 
\[
F^*_q(-1,\xi)\ge \frac{|\xi|}{2}\,m-\frac{1}{2}\,\left(\frac{|\xi|}{2}\right)^{\frac{2}{q}-2}\, m^\frac{2}{q}
\]
and the last quantity can be made strictly positive, for $m>0$ small enough, thanks to the fact that $2/q>1$. On the contrary, every point $(t,0)\in\mathcal{E}$ can not be a maximizer for the problem which defines $F^*_q(1,-\xi)$, since on these points
\[
-t+m\,|\xi|-\frac{1}{2}\,m^2\,t^{\frac{2}{q}-2}=-t\le 0.
\]
This simple observation implies that we can rewrite the maximization problem for $F^*_q(1,-\xi)$ as
\[
F^*_q(-1,\xi)=\sup_{t>0,m>0} \left[-t+m\,|\xi|-\frac{1}{2}\,m^2\,t^{\frac{2}{q}-2}\right].
\]
In order to explicitly compute this, we will exploit the homogeneity of the function $(t,m)\mapsto m^2\,t^{\frac{2}{q}-2}$.
Indeed, we first observe that by taking $\lambda>0$ and replacing $(t,m)$ by $(\lambda\,t,\lambda\,m)$ we get
\[
F^*_q(-1,\xi)=\sup_{t>0,\,m> 0,\,\lambda>0} \left[-\lambda\,t+\lambda\,m\,|\xi|-\lambda^\frac{2}{q}\,m^2\,t^{\frac{2}{q}-2}\right].
\]
Now we observe that the derivative of the function
\[
h(\lambda)=-\lambda\,t+\lambda\,m\,|\xi|-\lambda^\frac{2}{q}\,m^2\,t^{\frac{2}{q}-2},
\]
is given by
\[
h'(\lambda)=-t+m\,|\xi|-\frac{2}{q}\,\lambda^{\frac{2}{q}-1}\,m^2\,t^{\frac{2}{q}-2}.
\]
We now distinguish two cases: if $m\,|\xi|-t\le 0$, the previous computation shows that $h$ is decreasing on $(0,+\infty)$ and thus
\[
h(\lambda)\le \lim_{\lambda\to0^+} h(\lambda)=0.
\]
On the other hand, if $m\,|\xi|-t >0$, then we get that $h$ has a unique maximum point at
\[
\lambda_0=\left(\frac{q}{2}\,\frac{m\,|\xi|-t}{m^2\,t^{\frac{2}{q}-2}}\right)^\frac{q}{2-q},
\]
thus 
\[
\begin{split}
h(\lambda)&\le\left(\frac{q}{2}\,\frac{m\,|\xi|-t}{m^2\,t^{\frac{2}{q}-2}}\right)^\frac{q}{2-q}\,\Big(-t+m\,|\xi|\Big)-\left(\frac{q}{2}\,\frac{m\,|\xi|-t}{m^2\,t^{\frac{2}{q}-2}}\right)^\frac{2}{2-q}\,m^2\,t^{\frac{2}{q}-2}\\
&=\left(\frac{m\,|\xi|-t}{m^q\,t^{1-q}}\right)^\frac{2}{2-q}\,\left(\frac{q}{2}\right)^\frac{q}{2-q}\,\frac{2-q}{2}.
\end{split}
\]
This discussion entails that
\[
F^*_q(-1,\xi)=\left(\frac{q}{2}\right)^\frac{q}{2-q}\,\frac{2-q}{2}\,\sup_{t> 0,\,m> 0} \left\{\left(\frac{m\,|\xi|-t}{m^q\,t^{1-q}}\right)^\frac{2}{2-q}\, :\, m\,|\xi|>t\right\}.
\]
We are left with computing such a supremum. We may notice that the objective function only depends on the ratio $t/m$, indeed we have
\[
\frac{m\,|\xi|-t}{m^q\,t^{1-q}}=\left(\frac{t}{m}\right)^q\,\left(\frac{m}{t}\,|\xi|-1\right).
\]
Thus, if we set $\tau=t/m$, we finally arrive at the problem
\[
F^*_q(-1,\xi)=\left(\frac{q}{2}\right)^\frac{q}{2-q}\,\frac{2-q}{2}\,\sup_{\tau> 0} \left\{\left(\tau^q\,\left(\frac{|\xi|}{\tau}-1\right)\right)^\frac{2}{2-q}\, :\, |\xi|>\tau\right\}.
\]
It is easily seen that the function
\[
f(\tau)=\tau^q\,\left(\frac{|\xi|}{\tau}-1\right),
\]
is maximal for 
\[
\tau_0=\frac{q-1}{q}\,|\xi|,
\]
which is an admissible value for $\tau$. Thus we obtain
\[
\sup_{\tau> 0} \left\{\tau^q\,\left(\frac{|\xi|}{\tau}-1\right)\, :\, |\xi|>\tau\right\}=\frac{1}{q}\,\left(\frac{q-1}{q}\right)^{q-1}\,|\xi|^q,
\]
which eventually leads to
\[
F^*_q(-1,\xi)=\left(\frac{q}{2}\right)^\frac{q}{2-q}\,\frac{2-q}{2}\,\left(\frac{1}{q}\,\left(\frac{q-1}{q}\right)^{q-1}\right)^\frac{2}{2-q}\,|\xi|^\frac{2\,q}{2-q}.
\]
Thanks to the positive homogeneity of $F_q^*$ already discussed, we get the desired conclusion.
\end{proof}
\begin{oss}[Relation between $F^*_q$ and $G_q$]
\label{oss:costanteriscala}
From the previous result, we get more generally that for every $C>0$, we have
\[
(C\,F_q)^*(s,\xi)=C\,F^*_q\left(\frac{s}{C},\frac{\xi}{C}\right)=C^{-\frac{q}{2-q}}\,F_q^*(s,\xi).
\]
This easily follows from the properties of the Legendre-Fenchel transform, together with the fact that $F_q^*$ is $2/(2-q)-$positively homogeneous. In particular, by taking $C=1/(2\,q)$, we have 
\[
\left(\frac{1}{2\,q}\,F_q\right)^*(s,\xi)=(2\,q)^\frac{q}{2-q}\,F_q^*(s,\xi).
\]
By recalling the definition \eqref{G} of $G_q$, we easily see that
\[
\Big(G_q(s,\xi)\Big)^\frac{2}{2-q}=\frac{1}{\alpha_q}\,F_q^*(s,\xi),
\]
and thus
\[
\left(\frac{1}{2\,q}\,F_q\right)^*(s,\xi)=(2\,q)^\frac{q}{2-q}\,F_q^*(s,\xi)=(2\,q)^\frac{q}{2-q}\,\alpha_q\, \Big(G_q(s,\xi)\Big)^\frac{2}{2-q}.
\]
Finally, by using that 
\[
\alpha_q=\frac{2-q}{2\,q}\,\left(\frac{q-1}{q}\right)^\frac{2\,(q-1)}{2-q}\,\left(\frac{1}{2}\right)^\frac{q}{2-q},
\]
we get the relation
\begin{equation}
\label{dualevero}
\left(\frac{1}{2\,q}\,F_q\right)^*(s,\xi)=\frac{2-q}{2}\,(q-1)^{(q-1)\,\frac{2}{2-q}}\, \Big(G_q(s,\xi)\Big)^\frac{2}{2-q}.
\end{equation}
We are going to use this identity in the proof of the main result.
\end{oss}
\begin{comment}
\begin{oss}
We notice that 
\[
\lim_{q\to 1^+} \alpha_q=\frac{1}{4},
\]
and
\[
\lim_{q\to 1^+} F_q(t,x)=|x|^2\qquad \mbox{ and }\qquad \lim_{q\to 1^+} F^*_q(s,\xi)=\frac{1}{4}\,|\xi|^2.
\]
\end{oss}
\end{comment}

\section{A concave maximization problem}
\label{sec:4}

By combining Lemma \ref{lm:base} and the convexity of the function $F_q$ above, we can rewrite the variational problem which defines $\lambda_1(\Omega;q)$ as a concave optimization problem. This property is crucial for the proof of Theorem \ref{teo:sub}.
\begin{prop}
\label{prop:riformula}
Let $1<q<2$ and let $\Omega\subset\mathbb{R}^N$ be an open set with finite measure. We define the following subset of $W^{1,2}_0(\Omega)$
\[
X_q(\Omega)=\left\{\psi\in W^{1,2}_0(\Omega)\cap L^\infty(\Omega)\, :\, \int_\Omega F_q(\psi,\nabla \psi)\,dx<+\infty\right\}.
\]
Then $X_q(\Omega)$ is convex and we have
\begin{equation}
\label{uguali}
\begin{split}
\frac{2-q}{q}\,\left(\frac{1}{\lambda_1(\Omega;q)}\right)^{\frac{q}{2-q}}&=\max_{\varphi\in W^{1,2}_0(\Omega)}\left\{\frac{2}{q}\,\int_\Omega |\varphi|^q\,dx-\int_\Omega |\nabla \varphi|^2\,dx\right\}\\
&=\sup_{\psi\in X_q(\Omega)}\left\{\frac{2}{q}\,\int_\Omega \psi\,dx-\frac{1}{q^2}\,\int_\Omega F_q(\psi,\nabla \psi)\,dx\right\}.
\end{split}
\end{equation}
Finally, the last supremum is attained by a function $v\in X_q(\Omega)$ of the form 
\[
v=w^q,
\]
where $w$ is the same as in Lemma \ref{lm:base}.
\end{prop}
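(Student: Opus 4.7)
The first of the two equalities in \eqref{uguali} is exactly the content of Proposition \ref{lm:base}, so the plan is to prove equality between the second expression and the third, and verify attainment. The convexity of $X_q(\Omega)$ follows immediately from the convexity of $F_q$ established in Lemma \ref{lm:fenchel}: if $\psi_0,\psi_1\in X_q(\Omega)$ and $\lambda\in[0,1]$, then $\psi_\lambda=\lambda\psi_0+(1-\lambda)\psi_1$ lies in $W^{1,2}_0(\Omega)\cap L^\infty(\Omega)$, and pointwise convexity gives
\[
F_q(\psi_\lambda,\nabla\psi_\lambda)\le \lambda\,F_q(\psi_0,\nabla\psi_0)+(1-\lambda)\,F_q(\psi_1,\nabla\psi_1),
\]
whose integral is finite.

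The core computation is the following substitution: for every nonnegative $\varphi\in W^{1,2}_0(\Omega)\cap L^\infty(\Omega)$, the function $\psi:=\varphi^q$ belongs to $W^{1,2}_0(\Omega)\cap L^\infty(\Omega)$ with $\nabla\psi=q\,\varphi^{q-1}\nabla\varphi$, and a direct computation (valid pointwise where $\varphi>0$, and trivial where $\varphi=0$) gives
\[
F_q(\psi,\nabla\psi)=|\nabla\psi|^2\,\psi^{\frac{2}{q}-2}=q^2\,|\nabla\varphi|^2,\qquad \int_\Omega\psi\,dx=\int_\Omega\varphi^q\,dx.
\]
Consequently the concave functional in the third expression of \eqref{uguali}, evaluated at $\psi=\varphi^q$, reproduces exactly the unconstrained functional of the second expression evaluated at $\varphi$. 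This immediately yields the inequality ``$\ge$'': applying it to $\varphi=w$, the nonnegative optimizer provided by Proposition \ref{lm:base} (which satisfies $w\in L^\infty(\Omega)$, $1/w\in L^\infty_{\rm loc}(\Omega)$, and hence $v:=w^q\in X_q(\Omega)$), we obtain a competitor $v$ whose value in the third expression equals the maximum in the second. This will also establish the attainment claim.

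For the reverse inequality ``$\le$'', I take $\psi\in X_q(\Omega)$. The definition of $F_q$ forces $\psi\ge 0$ a.e.\ (otherwise $\int F_q(\psi,\nabla\psi)=+\infty$) and forces $\nabla\psi=0$ a.e.\ on $\{\psi=0\}$. The idea is then to set $\varphi:=\psi^{1/q}$ and reverse the computation above. The technical point, which is the main obstacle, is verifying that $\varphi\in W^{1,2}_0(\Omega)$ and that its weak gradient is $(1/q)\,\psi^{1/q-1}\nabla\psi$ on $\{\psi>0\}$ and zero on $\{\psi=0\}$. I would do this by a standard regularization: consider
\[
\Phi_\varepsilon(t)=(t+\varepsilon)^{1/q}-\varepsilon^{1/q},\qquad t\ge 0,
\]
which is Lipschitz with $\Phi_\varepsilon(0)=0$, so $\Phi_\varepsilon(\psi)\in W^{1,2}_0(\Omega)$ with $\nabla\Phi_\varepsilon(\psi)=(1/q)(\psi+\varepsilon)^{1/q-1}\nabla\psi$. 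Since $2/q-2<0$, the bound
\[
|\nabla\Phi_\varepsilon(\psi)|^2=\frac{1}{q^2}(\psi+\varepsilon)^{\frac{2}{q}-2}|\nabla\psi|^2\le \frac{1}{q^2}F_q(\psi,\nabla\psi)
\]
holds a.e.\ (understood as $0$ on $\{\psi=0\}$, where $|\nabla\psi|=0$). Dominated convergence lets me pass to the limit $\varepsilon\to 0^+$ and conclude $\Phi_\varepsilon(\psi)\to\varphi$ strongly in $W^{1,2}_0(\Omega)$, identifying the weak gradient and, in particular, $\varphi\in W^{1,2}_0(\Omega)$ with $|\nabla\varphi|^2=F_q(\psi,\nabla\psi)/q^2$ a.e. The substitution identity then becomes an equality between the value of the concave functional at $\psi$ and the value of the unconstrained functional at this $\varphi$; taking the supremum over $\psi$ bounds the third expression by the second, and combined with the previous paragraph closes the chain of equalities and proves that $v=w^q$ realizes the supremum.
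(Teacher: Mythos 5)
Your proposal is correct and follows essentially the same route as the paper: the substitution $\psi=\varphi^q$ in both directions, with a Lipschitz regularization of $t\mapsto t^{1/q}$ near the origin to justify that $\psi^{1/q}$ (or its approximants) can be used as a competitor in the maximization over $W^{1,2}_0(\Omega)$. The paper uses the slightly different regularizer $g_\varepsilon(\tau)=(\varepsilon^q+\tau)^{1/q}-\varepsilon$ and avoids having to prove that $\psi^{1/q}$ itself lies in $W^{1,2}_0(\Omega)$ — it simply keeps the approximants $\varphi_\varepsilon=g_\varepsilon(\psi)$ as competitors, bounds $\int|\nabla\varphi_\varepsilon|^2\le q^{-2}\int F_q(\psi,\nabla\psi)$ uniformly in $\varepsilon$, and passes to the limit only in the term $\int g_\varepsilon(\psi)^q\,dx$ by dominated convergence — but the extra step you take (upgrading to strong $W^{1,2}_0$ convergence and an exact pointwise identity) is valid and yields the same conclusion.
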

\begin{proof}
Convexity of $X_q(\Omega)$ immediately follows from the convexity of the function $F_q$. 
We now come to the proof of \eqref{uguali}.
The first identity is already contained in Lemma \ref{lm:base}. Let us take $w\in W^{1,2}_0(\Omega)\cap L^\infty(\Omega)$ to be the positive maximizer of 
\[
\max_{\varphi\in W^{1,2}_0(\Omega)}\left\{\frac{2}{q}\,\int_\Omega |\varphi|^q\,dx-\int_\Omega |\nabla \varphi|^2\,dx\right\}.
\]
We now set $v=w^q$ and observe that $v\in W^{1,2}_0(\Omega)\cap L^\infty(\Omega)$, since $v$ is the composition of a function in $W^{1,2}_0(\Omega)\cap L^\infty$ with a locally Lipschitz function, vanishing at the origin. From the chain rule in Sobolev spaces, we get
\[
\nabla v=q\,w^{q-1}\,\nabla w=q\,v^\frac{q-1}{q}\,\nabla w,
\]
where we also used the relation between $w$ and $v$, to replace $w^{q-1}$. Since $w>0$ in $\Omega$, we have the same property for $v$, as well. Thus we can infer
\[
\nabla w=\frac{1}{q}\,v^{\frac{1}{q}-1}\,\nabla v.
\]
By raising to the power $2$ and integrating, we get
\[
\int_\Omega |\nabla w|^2\,dx=\frac{1}{q^2}\,\int_\Omega |\nabla v|^2\,v^{\frac{2}{q}-2}\,dx=\frac{1}{q^2}\, \int_\Omega F_q(v,\nabla v)\,dx,
\]
which shows that $v\in X_q(\Omega)$. By recalling that $w$ is optimal, this also shows that 
\[
\max_{\varphi\in W^{1,2}_0(\Omega)}\left\{\frac{2}{q}\,\int_\Omega |\varphi|^q\,dx-\int_\Omega |\nabla \varphi|^2\,dx\right\}\le \sup_{\psi\in X_q(\Omega)}\left\{\frac{2}{q}\,\int_\Omega \psi\,dx-\frac{1}{q^2}\,\int_\Omega F_q(\psi,\nabla \psi)\,dx\right\}.
\]
On the other hand, let $\psi\in X_q(\Omega)$. Thanks to the form of the function $F_q$, this in particular implies that
\[
\psi(x)\ge 0,\qquad \mbox{ for a.\,e. }x\in\Omega. 
\]
For every $\varepsilon>0$, we introduce the $C^1$ function
\[
g_\varepsilon(\tau)=(\varepsilon^q+\tau)^\frac{1}{q}-\varepsilon,\qquad \mbox{ for every } \tau\ge 0.
\]
Then we set $\varphi_\varepsilon=g_\varepsilon\circ \psi$ and observe that $\varphi_\varepsilon\in W^{1,2}_0(\Omega)$, thanks to the fact that $g_\varepsilon$ is $C^1$ with bounded derivative and $g_\varepsilon(0)=0$. Again by the chain rule, we have
\[
\nabla \varphi_\varepsilon=g'_\varepsilon(\psi)\,\nabla \psi=\frac{1}{q}\,(\varepsilon^q+\psi)^{\frac{1}{q}-1}\,\nabla \psi.
\]
By integrating, we get
\[
\int_\Omega |\nabla \varphi_\varepsilon|^2\,dx=\frac{1}{q^2}\,\int_\Omega (\varepsilon^q+\psi)^{\frac{2}{q}-2}\,|\nabla \psi|^2\,dx\le \frac{1}{q^2}\,\int_\Omega F_q(\psi,\nabla \psi)\,dx. 
\]
In the last inequality, we used the well-known fact that $\nabla \psi$ vanishes almost everywhere on the zero set of the Sobolev function $\psi$ (see for example \cite[Theorem 6.19]{LL}).
This in turn implies that 
\[
\begin{split}
\max_{\varphi\in W^{1,2}_0(\Omega)}\left\{\frac{2}{q}\,\int_\Omega |\varphi|^q\,dx-\int_\Omega |\nabla \varphi|^2\,dx\right\}&\ge \frac{2}{q}\,\int_\Omega |\varphi_\varepsilon|^q\,dx-\int_\Omega |\nabla \varphi_\varepsilon|^2\,dx\\
&\ge \frac{2}{q}\,\int_\Omega g_\varepsilon(\psi)^q\,dx-\frac{1}{q^2}\,\int_\Omega F_q(\psi,\nabla \psi)\,dx.
\end{split}
\]
It is only left to pass to the limit as $\varepsilon$ goes to $0$ in the integral containing $g_\varepsilon(\psi)$. This can be done by a standard application of the Lebesgue Dominated Convergence Theorem. This finally leads to 
\[
\max_{\varphi\in W^{1,2}_0(\Omega)}\left\{\frac{2}{q}\,\int_\Omega |\varphi|^q\,dx-\int_\Omega |\nabla \varphi|^2\,dx\right\}\ge \frac{2}{q}\,\int_\Omega \psi\,dx-\frac{1}{q^2}\,\int_\Omega F_q(\psi,\nabla \psi)\,dx.
\]
By arbitrariness of $\psi\in X_q(\Omega)$, we eventually get the desired conclusion \eqref{uguali}.
\par
The above discussion also prove the last statement, about a maximizer of the problem settled over $X_q(\Omega)$.
\end{proof}
\begin{oss}
\label{oss:hidden}
The previous result is crucially based on the fact that the Dirichlet integral, apart from being convex in the usual sense, enjoys a suitable form of ``hidden'' convexity. In other words, for $\varphi$ positive we have that
\[
\varphi\mapsto \int_\Omega |\nabla \varphi|^2\,dx,
\] 
remains convex also with respect to the new variable $\psi=\varphi^q$, for the whole range $1\le q\le 2$. In the limit case $q=2$, this remarkable fact has been proved by Benguria, see \cite[Theorem 4.3]{Be1} and \cite[Lemma 4]{BBL}. For $1<q<2$ this property seems to have been first detected in \cite[Proposition 4]{Ka}, see also \cite[Proposition 1.1]{Na} and \cite[Example 5.2]{TTU}.
\end{oss}
Actually, we can restrict the maximization to smooth compactly supported functions, without affecting the value of the supremum. This is the content of the following result.
\begin{lm}
\label{lm:riformula}
With the notation of Proposition \ref{prop:riformula}, we have
\[
\sup_{\psi\in X_q(\Omega)}\left\{\frac{2}{q}\,\int_\Omega \psi\,dx-\frac{1}{q^2}\,\int_\Omega F_q(\psi,\nabla \psi)\,dx\right\}=\sup_{\psi\in X_q(\Omega)\cap C^1_0(\Omega)}\left\{\frac{2}{q}\,\int_\Omega \psi\,dx-\frac{1}{q^2}\,\int_\Omega F_q(\psi,\nabla \psi)\,dx\right\}.
\]
\end{lm}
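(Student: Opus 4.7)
The inequality ``$\geq$'' in the stated identity is trivial, since $X_q(\Omega) \cap C^1_0(\Omega) \subset X_q(\Omega)$. For the non-trivial direction ``$\leq$'', my plan is to approximate a maximizer of the right-hand side of \eqref{uguali} by elements of $X_q(\Omega) \cap C^1_0(\Omega)$ whose functional value converges to the supremum.

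By Proposition \ref{prop:riformula}, the common value of the two suprema in \eqref{uguali} is attained in the $W^{1,2}_0(\Omega)$-problem by the non-negative function $w$ of Lemma \ref{lm:base}. Since $C^\infty_0(\Omega)$ is dense in $W^{1,2}_0(\Omega)$, I would first choose $\widetilde\varphi_n \in C^\infty_0(\Omega)$ with $\widetilde\varphi_n \to w$ strongly in $W^{1,2}_0(\Omega)$. Taking positive parts $\varphi_n := (\widetilde\varphi_n)_+$ yields non-negative Lipschitz functions with compact support in $\Omega$; since $w = w_+$, the continuity of the positive-part truncation in $W^{1,2}_0(\Omega)$ ensures that $\varphi_n \to w$ strongly in $W^{1,2}_0(\Omega)$ as well.

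The candidate approximants would then be $\psi_n := \varphi_n^q$. The crucial observation is that, because $q > 1$, the map $t \mapsto t_+^q$ is of class $C^1$ on all of $\mathbb{R}$ with vanishing derivative at $0$; composed with the smooth $\widetilde\varphi_n$ this produces $\psi_n \in C^1_0(\Omega)$, supported inside $\mathrm{supp}(\widetilde\varphi_n)$. The chain rule, exactly as invoked in the proof of Proposition \ref{prop:riformula}, gives $\nabla \psi_n = q\,\varphi_n^{q-1}\,\nabla \varphi_n$ almost everywhere and hence
\[
F_q(\psi_n, \nabla \psi_n) = q^2\,|\nabla \varphi_n|^2 \quad \text{a.e. in } \Omega,
\]
so $\psi_n \in X_q(\Omega) \cap C^1_0(\Omega)$ and the functional at $\psi_n$ reduces to
\[
\frac{2}{q}\int_\Omega \varphi_n^q\,dx - \int_\Omega |\nabla \varphi_n|^2\,dx.
\]
The strong convergence $\varphi_n \to w$ in $W^{1,2}_0(\Omega)$, combined with the compact embedding $W^{1,2}_0(\Omega) \hookrightarrow L^q(\Omega)$, forces this expression to converge to $\frac{2}{q}\int_\Omega w^q\,dx - \int_\Omega |\nabla w|^2\,dx$, which by Proposition \ref{prop:riformula} is precisely the common value in \eqref{uguali}. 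This yields the desired bound.

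The step I expect to require the most care is verifying the global $C^1$-regularity of $\psi_n = ((\widetilde\varphi_n)_+)^q$ across the boundary of the positivity set $\{\widetilde\varphi_n > 0\}$, where $\varphi_n$ itself has only a Lipschitz corner. The saving grace is the super-linear flattening of $t \mapsto t_+^q$ for $q>1$: at a zero $x_0$ of $\widetilde\varphi_n$ one has $\psi_n(x) = O(|\widetilde\varphi_n(x)|^q) = o(|x-x_0|)$, which forces differentiability with vanishing gradient at $x_0$, while continuity of $\nabla \psi_n = q\,\varphi_n^{q-1}\,\nabla \varphi_n$ at such points follows from $\varphi_n^{q-1}\to 0$ and the boundedness of $\nabla \widetilde\varphi_n$. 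Should this direct argument fail in some edge case, the backup is to mollify $\varphi_n$ at a scale smaller than $\mathrm{dist}(\mathrm{supp}(\varphi_n),\partial\Omega)$ before raising to the $q$-th power, using a diagonal selection to preserve both compact support in $\Omega$ and the strong $W^{1,2}_0$-convergence to $w$.
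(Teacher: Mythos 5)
Your proof is correct and rests on the same key idea as the paper's: for $q>1$ the map $t\mapsto |t|^q$ (or $t\mapsto t_+^q$) is $C^1$ with vanishing derivative at $0$, so composing with a smooth compactly supported function produces an element of $X_q(\Omega)\cap C^1_0(\Omega)$ whose functional value equals $\frac{2}{q}\int|\varphi|^q-\int|\nabla\varphi|^2$. The paper's version is a bit leaner: instead of tracking a specific maximizing sequence $\widetilde\varphi_n\to w$ (and needing continuity of the positive-part truncation and the compact embedding to pass to the limit), it takes an \emph{arbitrary} $\varphi\in C^\infty_0(\Omega)$, sets $\psi=|\varphi|^q$ directly (so no positive parts are needed), deduces the pointwise inequality $F_q(\psi,\nabla\psi)\le q^2|\nabla\varphi|^2$, and concludes that the supremum over $X_q\cap C^1_0$ dominates the supremum over $C^\infty_0$, which by density equals the $W^{1,2}_0$ maximum; combining with Proposition~\ref{prop:riformula} finishes the proof. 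Your worry about the $C^1$-regularity of $\psi_n$ at the free boundary of the positivity set is unfounded and the backup mollification is unnecessary: $t\mapsto t_+^q$ is globally $C^1$ for $1<q<2$, exactly as you argue, so the chain rule applies cleanly.
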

\begin{proof}
We just need to prove that 
\[
\sup_{\psi\in X_q(\Omega)}\left\{\frac{2}{q}\,\int_\Omega \psi\,dx-\frac{1}{q^2}\,\int_\Omega F_q(\psi,\nabla \psi)\,dx\right\}\le \sup_{\psi\in X_q(\Omega)\cap C^1_0(\Omega)}\left\{\frac{2}{q}\,\int_\Omega \psi\,dx-\frac{1}{q^2}\,\int_\Omega F_q(\psi,\nabla \psi)\,dx\right\}.
\]
We take $\varphi\in C^\infty_0(\Omega)$ not identically zero and set $\psi=|\varphi|^q\in C^1_0(\Omega)$. As above, we have 
\[
|\nabla \psi|=q\,|\varphi|^{q-1}\,|\nabla \varphi|=q\,\psi^\frac{q-1}{q}\,|\nabla \varphi|,
\]
which holds everywhere on $\Omega$. This in particular implies that $\nabla \psi$ vanishes on every point where $\psi$ vanishes. Thus we have 
\[
F_q(\psi,\nabla \psi)=\left\{\begin{array}{rl}
|\nabla\psi|^2\,\psi^{\frac{2}{q}-2},& \mbox{ if } \psi\not=0,\\
0,& \mbox{ if }\psi=0.
\end{array}
\right.
\]
By integrating and recalling the relation above between $\nabla\psi$, $\psi$ and $\nabla\varphi$, we then obtain
\[
\int_\Omega F_q(\psi,\nabla\psi)\,dx\le q^2\,\int_\Omega |\nabla \varphi|^2\,dx.
\]
This in turn implies
\[
\frac{2}{q}\,\int_\Omega \psi\,dx-\frac{1}{q^2}\,\int_\Omega F_q(\psi,\nabla\psi)\,dx\ge \frac{2}{q}\,\int_\Omega |\varphi|^q\,dx-\int_\Omega |\nabla \varphi|^2\,dx.
\]
By arbitrariness of $\varphi\in C^\infty_0(\Omega)$, we get
\begin{equation}
\label{prot}
\begin{split}
\sup_{\psi\in X_q(\Omega)\cap C^1_0(\Omega)}&\left\{\frac{2}{q}\,\int_\Omega \psi\,dx-\frac{1}{q^2}\,\int_\Omega F_q(\psi,\nabla \psi)\,dx\right\}\\
&\ge \sup_{\varphi\in C^\infty_0(\Omega)}\left\{\frac{2}{q}\,\int_\Omega |\varphi|^q\,dx-\int_\Omega |\nabla \varphi|^2\,dx\right\}.
\end{split}
\end{equation}
On the other hand, by density of $C^\infty_0(\Omega)$ in $W^{1,2}_0(\Omega)$, it is easily seen that 
\[
\sup_{\varphi\in C^\infty_0(\Omega)}\left\{\frac{2}{q}\,\int_\Omega |\varphi|^q\,dx-\int_\Omega |\nabla \varphi|^2\,dx\right\}=\max_{\varphi\in W^{1,2}_0(\Omega)}\left\{\frac{2}{q}\,\int_\Omega |\varphi|^q\,dx-\int_\Omega |\nabla \varphi|^2\,dx\right\}.
\]
The desired conclusion now follows by combining Proposition \ref{prop:riformula} and \eqref{prot}.
\end{proof}

\section{Proof of the main results}
\label{sec:5}

\subsection{Proof of Theorem \ref{teo:sub}}

We recall the definition
\[
\mathcal{A}(\Omega)=\Big\{(f,\phi)\in L^1_{\rm loc}(\Omega)\times L^2_{\rm loc}(\Omega;\mathbb{R}^N)\, :\, -\mathrm{div\,} \phi+f\ge 1 \mbox{ in }\Omega\Big\},
\]
where the condition $-\mathrm{div\,} \phi+f\ge 1$ has to be intended in distributional sense, i.\,e.
\[
\int_\Omega \Big[\langle \nabla \psi,\phi\rangle+f\,\psi\Big]\,dx\ge \int_\Omega \psi\,dx,\qquad \mbox{ for every } \psi\in C^1_0(\Omega) \mbox{ such that } \psi\ge 0.
\]
%By density, we can admit in the previous identity test functions in $W^{1,2}_0(\Omega)\cap L^\infty(\Omega)$.
In particular, for every $(f,\phi)\in\mathcal{A}(\Omega)$ and every $\psi\in X_q(\Omega)\cap C^1_0(\Omega)$, we can write
\[
\begin{split}
\frac{2}{q}\,\int_\Omega \psi\,dx-\frac{1}{q^2}\,\int_\Omega F_q(\psi,\nabla \psi)\,dx&\le \frac{2}{q}\,\int_\Omega \Big[f\,\psi+\langle \nabla \psi,\phi\rangle\Big]\,dx-\frac{1}{q^2}\,\int_\Omega F_q(\psi,\nabla \psi)\,dx\\
&=\frac{2}{q}\,\int_\Omega \left[f\,\psi+\langle \nabla \psi, \phi\rangle-\frac{1}{2\,q}\,F_q(\psi,\nabla \psi)\right]\,dx.
\end{split}
\]
By Lemma \ref{lm:fenchel} and equation \eqref{dualevero} from Remark \ref{oss:costanteriscala}, the following inequality holds almost everywhere
\[
f\,\psi+\langle \nabla \psi,\nabla \phi\rangle-\frac{1}{2\,q}\,F_q(\psi,\nabla \psi)\le \frac{2-q}{2}\,(q-1)^{(q-1)\,\frac{2}{2-q}}\,\Big(G_q(f,\phi)\Big)^\frac{2}{2-q}.
\]
This simply follows from the definition of Legendre-Fenchel transform.
By integrating this inequality and taking the supremum over $\psi$, we obtain
\[
\begin{split}
\sup_{\psi\in X_q(\Omega)\cap C^1_0(\Omega)}&\left\{\frac{2}{q}\,\int_\Omega \psi\,dx-\frac{1}{q^2}\,\int_\Omega F_q(\psi,\nabla \psi)\,dx\right\}\le \frac{2-q}{q}\,(q-1)^{(q-1)\,\frac{2}{2-q}}\,\int_\Omega \Big(G_q(f,\phi)\Big)^\frac{2}{2-q}\,dx.
\end{split}
\]
By combining Proposition \ref{prop:riformula} and Lemma \ref{lm:riformula} and taking the infimum over admissible pairs $(f,\phi)$, we get
\[
\frac{1}{\lambda_1(\Omega;q)}\le (q-1)^{(q-1)\frac{2}{q}}\,\inf_{(f,\phi)\in \mathcal{A}(\Omega)}\left(\int_\Omega \Big(G_q(f,\phi)\Big)^\frac{2}{2-q}\,dx\right)^\frac{2-q}{q}.
\]
%Observe that the last infimum is automatically performed over functions $f$ such that $f \le 0$, thanks to the peculiar form of $G_q$.
%\par
In order to prove the reverse inequality and identify a minimizing pair, we take 
\[
\phi_0=\frac{\nabla w}{w^{q-1}}\qquad \mbox{ and }\qquad f_0=-(q-1)\,\frac{|\nabla w|^2}{w^{q}},
\]
where $w$ is the function of Lemma \ref{lm:base}. In light of the properties of $w$, both $\phi_0$ and $f_0$ have the required integrability properties.
Moreover, it is not difficult to see that these are admissible, since it holds
\[
-\mathrm{div\,}\phi_0+f_0=1,\qquad \mbox{ in }\Omega,
\]
in distributional sense.
It is sufficient to use the equation solved by $w$.
By recalling the definition \eqref{G} of $G_q$, when $\nabla w\not =0$ we have 
\[
G_q(f_0,\phi_0)=|\phi_0|^q\,|f_0|^{1-q}=(q-1)^{1-q}\,|\nabla w|^{2-q}.
\]
On the other hand, since both $f_0$ and $\phi_0$ vanish when $\nabla w =0$, in this case we  have that $G_q(f_0,\phi_0)$ would vanish, as well. In conclusion, we get
\[
\int_\Omega \Big(G_q(f_0,\phi_0)\Big)^\frac{2}{2-q}\,dx=(q-1)^\frac{2\,(1-q)}{2-q}\,\int_\Omega |\nabla w|^2\,dx.
\]
Thus we obtain 
\[
\begin{split}
(q-1)^{(q-1)\frac{2}{q}}\,\inf_{(f,\phi)\in \mathcal{A}(\Omega)}\left(\int_\Omega \Big(G_q(f_0,\phi_0)\Big)^\frac{2}{2-q}\,dx\right)^\frac{2-q}{q}\le \left(\int_\Omega |\nabla w|^2\,dx\right)^\frac{2-q}{q}.
\end{split}
\]
We can now use that by Lemma \ref{lm:base}
\[
\begin{split}
\int_\Omega |\nabla w|^2\,dx&=\frac{q}{2-q}\,\left(\frac{2}{q}\,\int_\Omega |w|^q\,dx-\int_\Omega|\nabla w|^2\,dx\right)\\
&=\frac{q}{2-q}\,\max_{\varphi\in W^{1,2}_0(\Omega)}\left\{\frac{2}{q}\,\int_\Omega |\varphi|^q\,dx-\int_\Omega |\nabla \varphi|^2\,dx\right\}=\left(\frac{1}{\lambda_1(\Omega;q)}\right)^\frac{q}{2-q}.
\end{split}
\]
The first identity above follows by testing the Euler-Lagrange equation for $w$, i.e.
\[
\int_\Omega \langle \nabla w,\nabla\varphi\rangle\,dx=\int_\Omega w^{q-1}\,\varphi\,dx,\qquad \mbox{ for every } \varphi\in W^{1,2}_0(\Omega),
\]
with $\varphi=w$ itself.
This finally proves that the reverse inequality holds
\[
(q-1)^{(q-1)\frac{2}{q}}\,\inf_{(f,\phi)\in \mathcal{A}(\Omega)}\left(\int_\Omega \Big(G_q(f_0,\phi_0)\Big)^\frac{2}{2-q}\,dx\right)^\frac{2-q}{q}\le \frac{1}{\lambda_1(\Omega;q)},
\]
as well. The second part of the proof also proves that $(f_0,\phi_0)$ is an optimal pair. 
\subsection{Proof of Theorem \ref{teo:homo}}
In order to prove the inequality
\begin{equation}
\label{l1}
\frac{1}{\lambda_1(\Omega)}\le \inf_{(f,\phi)\in\mathcal{A}(\Omega)} \Big\|G_2(f,\phi)\Big\|_{L^\infty(\Omega)},
\end{equation}
we will go through a limiting argument, for simplicity. Let $(f,\phi)\in \mathcal{A}(\Omega)$ be an admissible pair, we can suppose that 
\[
M:=\Big\|G_2(f,\phi)\Big\|_{L^\infty(\Omega)}<+\infty.
\]
Thanks to the definition of $G_2$, this implies in particular that the pair $(f,\phi)$ has the following properties:
\[
|f(x)|=0\quad \mbox{ implies that }\quad |\phi(x)|=0,\qquad \mbox{ for a.\,e. }x\in\Omega,
\]
and
\[
|\phi(x)|^2\le M\,|f(x)|,\qquad \mbox{ for a.\,e. }x\in \Omega.
\]
We thus obtain for every $1<q<2$
\[
\frac{|\phi|^q}{|f|^{q-1}}\le M^\frac{q}{2}\,|f|^\frac{2-q}{2}\in L^\frac{2}{2-q}_{\rm loc}(\Omega),
\]
and thus 
\[
\Big(G_q(f,\phi)\Big)^\frac{2}{2-q}\le M\,|f|\in L^1_{\rm loc}(\Omega).
\]
This estimate guarantees that for every open set $\Omega'$ compactly contained in $\Omega$, we have
\begin{equation}
\label{domenica}
\left(\int_{\Omega'} \Big(G_q(f,\phi)\Big)^\frac{2}{2-q}\,dx\right)^\frac{2-q}{q}\le M\,\left(\int_{\Omega'} |f|\,dx\right)^\frac{2-q}{q}= \Big\|G_2(f,\phi)\Big\|_{L^\infty(\Omega)}\, \left(\int_{\Omega'} |f|\,dx\right)^\frac{2-q}{q}.
\end{equation}
By Theorem \ref{teo:sub} applied to $\Omega'$, we have for every $1<q<2$
\[
\frac{1}{\lambda_1(\Omega';q)}\le (q-1)^{(q-1)\frac{2}{q}}\, \Big\|G_q(f,\phi)\Big\|_{L^\frac{2}{2-q}(\Omega')}^\frac{2}{q}.
\]
By using \eqref{domenica} on the right-hand side and taking the limit as $q$ goes to $2$, we get
\begin{equation}
\label{dentro}
\frac{1}{\lambda_1(\Omega')}\le  \Big\|G_2(f,\phi)\Big\|_{L^\infty(\Omega)},
\end{equation}
by virtue of Lemma \ref{lm:limite}.
We can now take an increasing sequence of open sets $\{\Omega_n\}_{n\in\mathbb{N}}$ compactly contained in $\Omega$ and invading it, i.e. such that 
\[
\Omega=\bigcup_{n\in\mathbb{N}} \Omega_n.
\] 
By using that\footnote{This simply follows from the properties of the sequence $\{\Omega_n\}_{n\in\mathbb{N}}$ and the fact that
\[
\lambda_1(\Omega)=\min_{\varphi\in W^{1,2}_0(\Omega)\setminus\{0\}} \frac{\displaystyle \int_\Omega |\nabla \varphi|^2\,dx}{\displaystyle\int_\Omega |\varphi|^2\,dx}=\inf_{\varphi\in C^\infty_0(\Omega)\setminus\{0\}} \frac{\displaystyle \int_\Omega |\nabla \varphi|^2\,dx}{\displaystyle\int_\Omega |\varphi|^2\,dx}.
\]} 
\[
\lim_{n\to\infty}\lambda_1(\Omega_n)=\lambda_1(\Omega),
\]
and applying \eqref{dentro} to each $\Omega_n$, we get
\[
\frac{1}{\lambda_1(\Omega)}\le  \Big\|G_2(f,\phi)\Big\|_{L^\infty(\Omega)}.
\]
By arbitrariness of $(f,\phi)\in\mathcal{A}(\Omega)$, we get \eqref{l1} as desired.
\par
In order to prove the reverse inequality, we take
\[
\phi_0=\frac{1}{\lambda_1(\Omega)}\,\frac{\nabla U}{U}\qquad \mbox{ and }\qquad f_0=-\frac{1}{\lambda_1(\Omega)}\,\frac{|\nabla U|^2}{U^2},
\]
where $U$ is any positive first eigenfunction of $\Omega$.
It is easily seen that this pair is admissible for the variational problem 
\[
\inf_{(f,\phi)\in \mathcal{A}(\Omega)} \Big\|G_2(f,\phi)\Big\|_{L^\infty(\Omega)}.
\]
Moreover, we have 
\[
G_2(f_0,\phi_0)\le \frac{1}{\lambda_1(\Omega)},\qquad \mbox{ a.\,e. in }\Omega,
\]
where we also used that $f_0$ vanishes if and only if $\phi_0$ vanishes and in this case $G_2(f_0,\phi_0)=0$. This gives
\[
\begin{split}
\inf_{(f,\phi)\in \mathcal{A}(\Omega)} \Big\|G_2(f,\phi)\Big\|_{L^\infty(\Omega)}\le \Big\|G_2(f_0,\phi_0)\Big\|_{L^\infty(\Omega)}=\frac{1}{\lambda_1(\Omega)},
\end{split}
\]
thus concluding the proof.

\section{Applications to geometric estimates}
\label{sec:6}

In this section, we briefly sketch some geometric estimates for the generalized principal frequencies, that can be inferred from our main result.

\subsection{Diaz-Weinstein--type estimates}
We start by recalling the {\it Diaz-Weinstein inequality} for the torsional rigidity. This is given by the following estimate\footnote{In \cite{DW} the case $N=2$ is considered and a slightly different proof is given. The definition of torsional rigidity in \cite{DW} coincides with ours for simply connected sets, up to a multiplicative factor $4$.}
\begin{equation}
\label{DW}
T(\Omega)\le \frac{1}{N^2}\,\mathcal{I}_2(\Omega),\qquad \mbox{ where }\quad \mathcal{I}_2(\Omega)=\min\limits_{x_0\in\mathbb{R}^N}\int_\Omega |x-x_0|^2\,dx,
\end{equation}
see \cite[formula (11)]{DW}, which is valid for open sets $\Omega\subset\mathbb{R}^N$ such that
\[
\int_\Omega |x|^2\,dx<+\infty.
\]
The quantity $\mathcal{I}_2(\Omega)$ is sometimes called {\it polar moment of inertia} of $\Omega$. It is easily seen that the minimum in its definition is uniquely attained at the {\it centroid} of $\Omega$, i.e. at the point
\[
x_\Omega=\fint_\Omega x\,dx.
\]
The Diaz-Weinstein inequality can be proved by appealing to the dual formulation for the torsional rigidity. Indeed, for every $x_0\in\Omega$, it is sufficient to use the admissible vector field 
\[
\phi_0=\frac{x_0-x}{N},
\]
in the dual problem \eqref{dualT}. This automatically gives
\[
T(\Omega)\le \frac{1}{N^2}\,\int_\Omega |x-x_0|^2\,dx,
\]
and thus \eqref{DW} follows by arbitrariness of $x_0\in\mathbb{R}^N$.
We observe that this estimate is sharp, as equality is attained for a ball. Indeed, recall that the unique $W^{1,2}_0(\Omega)$ solution of $-\Delta u=1$ in a ball of radius $R$ and center $x_0$ is given by
\[
w(x)=\frac{R^2-|x-x_0|^2}{2\,N},\qquad \mbox{ for every $x\in\mathbb{R}^N$ such that }|x-x_0|<R.
\]
Thus, by observing that $\phi_0=\nabla w$ and recalling the discussion in Subsection \ref{sec:torsion}, we get the claimed optimality.
\vskip.2cm\noindent
We now show how the previous argument can be extended to the case $1<q<2$.
We fix again a point $x_0\in\mathbb{R}^N$ and take a constant $\alpha>1/N$, then we choose the pair
\[
\phi_0(x)=\alpha\,(x_0-x)\qquad \mbox{ and }\qquad f_0(x)=1-\alpha\,N.
\]
Observe that this solves 
\[
-\mathrm{div\,}\phi_0+f_0=1,\qquad \mbox{ in } \mathbb{R}^N.
\]
Thus the pair $(f_0,\phi_0)$ is admissible for the dual problem \eqref{formulab1}, for every $1<q<2$.
By Theorem \ref{teo:sub} we immediately get
\[
\begin{split}
\frac{1}{\lambda_1(\Omega;q)}&\le (q-1)^{(q-1)\frac{2}{q}}\, \Big\|G_q(f_0,\phi_0)\Big\|_{L^\frac{2}{2-q}(\Omega)}^\frac{2}{q}\\
&=(q-1)^{(q-1)\frac{2}{q}}\,\frac{\alpha^2}{(\alpha\,N-1)^{(q-1)\frac{2}{q}}}\,\left(\int_\Omega |x-x_0|^\frac{2\,q}{2-q}\,dx\right)^\frac{2-q}{q}.
\end{split}
\]
We now observe that the quantity
\[
\frac{\alpha^2}{(\alpha\,N-1)^{(q-1)\frac{2}{q}}},
\]
is minimal for $\alpha=q/N$. By making such a choice for $\alpha$ and using the
arbitrariness of $x_0$, we then get the following
\begin{coro}[Diaz-Weinstein--type estimate]
Let $1<q<2$ and let $\Omega\subset\mathbb{R}^N$ be an open set such that
\[
\int_\Omega |x|^\frac{2\,q}{2-q}\,dx<+\infty.
\]
Then we have
\begin{equation}
\label{moment}
\lambda_1(\Omega;q)\ge \left(\frac{q}{N}\right)^2\,\Big(\mathcal{I}_\frac{2\,q}{2-q}(\Omega)\Big)^{-\frac{2-q}{q}},\qquad \mbox{ where }\quad \mathcal{I}_\frac{2\,q}{2-q}(\Omega)=\min\limits_{x_0\in\mathbb{R}^N}\int_\Omega |x-x_0|^\frac{2\,q}{2-q}\,dx.
\end{equation}
\end{coro}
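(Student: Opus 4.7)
The plan is to exploit the dual characterization in Theorem \ref{teo:sub} by evaluating the right-hand side of \eqref{formulab1} on a judiciously chosen admissible pair. Since the target quantity $\mathcal{I}_{2q/(2-q)}(\Omega)$ is an integral of $|x-x_0|^{2q/(2-q)}$, and the integrand appearing in \eqref{formulab1} is $G_q(f,\phi)^{2/(2-q)} = (|\phi|^q/|f|^{q-1})^{2/(2-q)}$, the natural ansatz is the linear radial vector field $\phi_0(x) = \alpha(x_0-x)$ balanced by a constant $f_0 = 1 - \alpha N$, with $\alpha > 1/N$ a free parameter to be optimized later.

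The first step is to verify admissibility. Since $\phi_0$ is smooth and $f_0$ is constant (and the integrability hypothesis on $\Omega$ ensures $\phi_0 \in L^2_{\mathrm{loc}}$ and $f_0 \in L^1_{\mathrm{loc}}$), a direct computation gives $-\mathrm{div\,}\phi_0 + f_0 = \alpha N + (1 - \alpha N) = 1$ classically, hence distributionally. Thus $(f_0,\phi_0) \in \mathcal{A}(\Omega)$.

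Next, I would compute $G_q(f_0,\phi_0)$ explicitly. Because $f_0 = 1-\alpha N < 0$ is a negative constant and $\phi_0$ is nonzero almost everywhere, definition \eqref{G} gives
\[
G_q(f_0,\phi_0) = \frac{|\phi_0|^q}{|f_0|^{q-1}} = \frac{\alpha^q}{(\alpha N - 1)^{q-1}}\,|x-x_0|^q.
\]
Plugging this into \eqref{formulab1} yields
\[
\frac{1}{\lambda_1(\Omega;q)} \le (q-1)^{(q-1)\frac{2}{q}}\,\frac{\alpha^2}{(\alpha N - 1)^{(q-1)\frac{2}{q}}}\,\left(\int_\Omega |x-x_0|^{\frac{2q}{2-q}}\,dx\right)^{\frac{2-q}{q}}.
\]

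The final step is to optimize the prefactor over $\alpha > 1/N$. Logarithmic differentiation of $\alpha \mapsto \alpha^2/(\alpha N - 1)^{2(q-1)/q}$ gives the critical equation $q(\alpha N -1) = \alpha N(q-1)$, whose unique solution is $\alpha = q/N$. Substituting back, the factor $(q-1)^{(q-1)2/q}$ cancels the denominator $(\alpha N - 1)^{(q-1)2/q} = (q-1)^{(q-1)2/q}$ exactly, leaving the clean constant $(q/N)^2$. Taking then the infimum over $x_0 \in \mathbb{R}^N$ on the right-hand side converts the integral into $\mathcal{I}_{2q/(2-q)}(\Omega)$, giving \eqref{moment} after inversion. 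There is really no serious obstacle here: admissibility is immediate, and the main technical point is merely bookkeeping the exponents during the one-variable optimization in $\alpha$.
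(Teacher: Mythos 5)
Your proof is correct and matches the paper's argument essentially verbatim: same ansatz $\phi_0 = \alpha(x_0 - x)$, $f_0 = 1 - \alpha N$ with $\alpha > 1/N$, same verification of admissibility, same substitution into \eqref{formulab1}, and the same one-variable optimization yielding $\alpha = q/N$ and the clean constant $(q/N)^2$. There is nothing substantive to add.
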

%Of course, this is meaningful only for open sets $\Omega\subset\mathbb{R}^N$ such that
%\[
%\int_\Omega |x|^\frac{2\,q}{2-q}\,dx<+\infty,
%\]
%a condition which is stronger than having finite measure.
\begin{oss}
We notice that for $1<q<2$ the estimate \eqref{moment} does not appear to be sharp. In order to get the sharp constant, it seems unavoidable the use of more sophisticated arguments, based on {\it radially symmetric decreasing rearrangements}.
These permit to show that both quantities
\[
\lambda_1(\Omega;q)\qquad \mbox{ and }\qquad \mathcal{I}_\frac{2\,q}{2-q}(\Omega),
\]
are minimal for a ball, among sets with given measure. By combining these two facts, then we get that the sharp constant in \eqref{moment} is given by
\[
\lambda_1(B_1;q)\, \left(\mathcal{I}_\frac{2\,q}{2-q}(B_1)\right)^{\frac{2-q}{q}},
\]
where $B_1=\{x\in\mathbb{R}^N\, :\, |x|<1\}$.
\par
Nevertheless, we believe that the duality-based proof exposed above is interesting anyway: this gives a cheap way to get a scale invariant geometric estimate with a simple explicit constant, by means of an elementary argument.
\begin{comment}
\par
As for $q=2$, by taking the limit as $q$ goes to $2$ in \eqref{moment} we get
\[
\lambda_1(\Omega)\ge \left(\frac{2}{N}\right)^2\,\frac{1}{\displaystyle\min\limits_{x_0\in\mathbb{R}^N}\max_{x\in\Omega}|x-x_0|^2},
\]
provided $\Omega$ is bounded. It is not difficult to see that the quantity on the right-hand side coincides with the {\it circumradius} $\Gamma_\Omega$, i.e. the radius of the smallest ball containing $\Omega$. Then in this case we get
\[
\lambda_1(\Omega)\ge \left(\frac{2}{N}\right)^2\,\frac{1}{\Gamma_\Omega^2}.
\]
Then we see that this is a weaker estimate with respect to the sharp one 
\[
\lambda_1(\Omega)\ge \frac{\lambda_1(B_1)}{\Gamma_\Omega^2},
\]
which is a consequence of the monotonicity and scaling properties of $\lambda_1$.
\end{comment}
\end{oss}

%\subsection{} A more sophisticated estimate of the same flavor can be obtained as follows, provided $\Omega\subset\mathbb{R}^N$ is contained is a sufficiently narrow infinite convex cone.

\subsection{Cheeger--type estimates}
We recall that the {\it Cheeger constant} for an open set $\Omega\subset\mathbb{R}^N$ is given by
\[
h_1(\Omega)=\inf\left\{\frac{P(E)}{|E|}\, :\, E\subset \Omega \mbox{ bounded} \mbox{ with } |E|>0\right\}.
\]
Here $P(E)$ is the distributional perimeter of a set $E$. The Cheeger constant has the following dual characterization
\begin{equation}
\label{cheegerdual}
\frac{1}{h_1(\Omega)}=\min_{\phi\in L^\infty(\Omega;\mathbb{R}^N)}\Big\{\|\phi\|_{L^\infty(\Omega)}\, :\, -\mathrm{div\,}\phi=1 \mbox{ in }\Omega\Big\}.
\end{equation}
This characterization seems to have first appeared in \cite[Section 4]{St}. The fact that the minimum in \eqref{cheegerdual} is attained easily follows from the Direct Method in the Calculus of Variations.
\par
We take an optimal vector field $\phi_\Omega$ in \eqref{cheegerdual} and then for every $\varepsilon>0$ we make the choice
\[
\phi_0=(1+\varepsilon)\,\phi_\Omega\qquad \mbox{ and }\qquad f_0=-\varepsilon.
\]
By observing that $(f_0,\phi_0)\in\mathcal{A}(\Omega)$, from Theorem \ref{teo:sub} we get
\[
\begin{split}
\frac{1}{\lambda_1(\Omega;q)}&\le (q-1)^{(q-1)\frac{2}{q}}\,\Big\|G_q(f_0,\phi_0)\Big\|_{L^\frac{2}{2-q}(\Omega)}^\frac{2}{q}\\
&=(q-1)^{(q-1)\frac{2}{q}}\,\frac{(1+\varepsilon)^2}{\varepsilon^{(q-1)\,\frac{2}{q}}}\,\left(\int_\Omega |\phi_\Omega|^\frac{2\,q}{2-q}\,dx\right)^\frac{2-q}{q}\\
&\le (q-1)^{(q-1)\frac{2}{q}}\,\frac{(1+\varepsilon)^2}{\varepsilon^{(q-1)\,\frac{2}{q}}}\,\|\phi_\Omega\|_{L^\infty(\Omega)}^2\,|\Omega|^\frac{2-q}{q}.
\end{split}
\]
We now notice that the quantity
\[
\frac{(1+\varepsilon)^2}{\varepsilon^{(q-1)\,\frac{2}{q}}},
\]
is minimal with the choice $\varepsilon=(q-1)$. Then such a choice leads to the estimate
\[
\frac{1}{\lambda_1(\Omega;q)}\le q^2\,\frac{1}{h_1(\Omega)^2}\,|\Omega|^\frac{2-q}{q}.
\]
Thus, we proved the following
\begin{coro}
Let $1<q<2$ and let $\Omega\subset\mathbb{R}^N$ be an open set, with finite measure.
Then we have the Cheeger-type inequality
\begin{equation}
\label{cheeger}
\left(\frac{h_1(\Omega)}{q}\right)^2\le |\Omega|^\frac{2-q}{q}\,\lambda_1(\Omega;q).
\end{equation}
\end{coro}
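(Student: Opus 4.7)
The plan is to combine the dual characterization of $\lambda_1(\Omega;q)$ from Theorem \ref{teo:sub} with the dual formulation \eqref{cheegerdual} of the Cheeger constant. The strategy is to construct an admissible pair $(f_0,\phi_0)\in\mathcal{A}(\Omega)$ starting from a minimizer $\phi_\Omega$ of \eqref{cheegerdual}, i.e.\ a vector field satisfying $-\mathrm{div\,}\phi_\Omega=1$ in $\Omega$ and $\|\phi_\Omega\|_{L^\infty(\Omega)}=1/h_1(\Omega)$.

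First, I would introduce a parameter $\varepsilon>0$ and set
\[
\phi_0=(1+\varepsilon)\,\phi_\Omega,\qquad f_0\equiv -\varepsilon,
\]
so that $-\mathrm{div\,}\phi_0+f_0=(1+\varepsilon)-\varepsilon=1$ in the distributional sense, which makes the pair admissible. Since $f_0<0$ everywhere, the definition \eqref{G} of $G_q$ gives directly
\[
G_q(f_0,\phi_0)=\frac{(1+\varepsilon)^q}{\varepsilon^{q-1}}\,|\phi_\Omega|^q.
\]
Pointwise bounding $|\phi_\Omega|\le 1/h_1(\Omega)$ and using $|\Omega|<+\infty$ to integrate then yields
\[
\Big\|G_q(f_0,\phi_0)\Big\|_{L^{2/(2-q)}(\Omega)}^{2/q}\le \frac{(1+\varepsilon)^2}{\varepsilon^{(q-1)\,2/q}}\,\frac{1}{h_1(\Omega)^2}\,|\Omega|^{(2-q)/q}.
\]

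The final step is to plug this into Theorem \ref{teo:sub} and optimize the free parameter $\varepsilon$. An elementary computation shows that $\varepsilon\mapsto (1+\varepsilon)^2/\varepsilon^{(q-1)\,2/q}$ is minimized at $\varepsilon=q-1$; with this choice the factor $(q-1)^{(q-1)\,2/q}$ coming from Theorem \ref{teo:sub} cancels exactly against $\varepsilon^{(q-1)\,2/q}$, while the numerator reduces to $(1+(q-1))^2=q^2$. Rearranging the resulting bound on $1/\lambda_1(\Omega;q)$ immediately yields \eqref{cheeger}.

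There is no real obstacle in this argument: everything reduces to a clean algebraic optimization, once one notices the crucial point that the enlarged admissibility class $\mathcal{A}(\Omega)$, which allows a nonzero scalar component $f$, is what permits one to transfer a Cheeger-type estimate (whose native dual involves only a divergence-constrained vector field) to the sub-homogeneous regime $q>1$. The same scheme would degenerate in the limit $q\to 1^+$, where the optimal $\varepsilon$ tends to $0$ and the $f$-component disappears, consistent with the classical picture for torsional rigidity.
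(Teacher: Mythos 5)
Your proof is essentially identical to the paper's: same choice $\phi_0=(1+\varepsilon)\phi_\Omega$, $f_0\equiv-\varepsilon$ from the dual Cheeger minimizer, same computation of $G_q(f_0,\phi_0)$, same $L^\infty$ bound on $\phi_\Omega$, and the same optimization $\varepsilon=q-1$ leading to the cancellation of the $(q-1)^{(q-1)2/q}$ prefactor. The argument and all intermediate estimates match the paper's proof step for step.
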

\begin{oss}
By taking the limits as $q$ goes to $1$ and as $q$ goes to $2$ in \eqref{cheeger}, we recover 
\[
h_1(\Omega)^2\le \frac{|\Omega|}{T(\Omega)} \qquad \mbox{ and }\qquad \left(\frac{h_1(\Omega)}{2}\right)^2\le\lambda_1(\Omega),
\]
respectively. The first estimate has been proved in \cite[Theorem 2]{BE}, while the second one is the classical {\it Cheeger inequality} for the Laplacian, see \cite{cheeger}. Both inequalities are sharp in the following sense: by taking the $N-$dimensional unit ball $B_1(0)$, one may prove that 
\[
\lim_{N\to\infty} \frac{|B_1(0)|}{T(B_1(0))}\,\frac{1}{h_1(B_1(0))^2}=1\qquad \mbox{ and }\qquad \lim_{N\to\infty} \frac{\lambda_1(B_1(0))}{h_1(B_1(0))^2}=\frac{1}{4}.
\]
The first fact can be easily seen, by recalling that
\[
T(B_1(0))=\frac{|B_1(0)|}{N\,(N+2)}\qquad \mbox{ and }\qquad h_1(B_1(0))=N.
\]
The second fact has been recently observed in \cite[Theorem 1.3]{Ft} and is based on asymptotics for zeros of Bessel functions. 
\par
This somehow suggests that the general estimate \eqref{cheeger} should be sharp, as well, by using a similar argument. However, the task of computing the exact asymptotics for $\lambda_1(B_1(0);q)$, as the dimension $N$ goes to $\infty$, does not seem easy.
\par
We refer to \cite{Le} for some recent progress on Cheeger--type inequalities.
\end{oss}

\subsection{Hersch-Makai--type estimates} We now take $\Omega\subset\mathbb{R}^N$ to be an open bounded convex set. We will employ in a dual way a trick by Kajikiya (see \cite{Kaj, Kaj2} and also \cite{BM}), in conjunction with our duality result. This will give us a sharp lower bound on $\lambda_1(\Omega;q)$ in terms on the inradius and the perimeter of the set.
\vskip.2cm\noindent
We indicate by $d_\Omega$ the distance function from the boundary $\partial\Omega$, while $R_\Omega$ will be the {\it inradius} of $\Omega$. We recall that this coincides with the supremum of the distance function.
\par
We take $g$ to be the unique positive solution of
\[
\max_{\varphi\in W^{1,2}_0((-1,1))}\left\{\frac{2}{q}\,\int_{-1}^1 |\varphi|^q\,dt-\int_{-1}^1 |\varphi'|^2\,dt\right\}.
\]
This satisfies the equation
\[
-g''=g^{q-1},\quad \mbox{ in } (-1,1),\qquad \mbox{ with } g(-1)=g(1)=0.
\]
This is a concave even function, which is increasing on $(-1,0)$ and decreasing on $(0,1)$.
We then ``transplant'' this function to $\Omega$, by setting
\[
u(x)=R_\Omega^\frac{2}{2-q}\,g\left(\frac{d_\Omega(x)}{R_\Omega}-1\right),\qquad \mbox{ for } x\in\Omega.
\]
By using the equation solved by $g$, the fact that $|\nabla d_\Omega|=1$ almost everywhere and the weak superharmonicity\footnote{This follows from the convexity of $\Omega$.} of $d_\Omega$, we get that
\[
-\Delta u\ge u^{q-1},\qquad \mbox{ in }\Omega,
\]
in weak sense. This entails that the pair
\[
\phi_0=\frac{\nabla u}{u^{q-1}}\qquad \mbox{ and }\qquad f_0=-(q-1)\,\frac{|\nabla u|^2}{u^q},
\]
is admissible for the dual problem \eqref{formulab1}. By Theorem \ref{teo:sub}, we then get
\[
\frac{1}{\lambda_1(\Omega;q)}\le (q-1)^{(q-1)\frac{2}{q}}\,\Big\|G_q(f_0,\phi_0)\Big\|_{L^\frac{2}{2-q}(\Omega)}^\frac{2}{q}=\left( \int_\Omega |\nabla u|^2\,dx\right)^\frac{2-q}{q}.
\]
By using the explicit form of $u$ and again the fact that $|\nabla d_\Omega|=1$ almost everywhere in $\Omega$, the previous estimate can be rewritten as
\begin{equation}
\label{ziooo}
\frac{1}{\lambda_1(\Omega;q)}\le R^2_\Omega\,\left(\int_\Omega \left|g'\left(\frac{d_\Omega}{R_\Omega}-1\right)\right|^2\,dx\right)^\frac{2-q}{q}.
\end{equation}
We observe that this is already a geometric estimate in nuce, since the right-hand side only depends on elementary geometric quantities of $\Omega$ (i.e. the distance function and the inradius) and on the universal one-dimensional function $g$. Moreover, it is not difficult to see that \eqref{ziooo} is sharp (see Remark \ref{oss:HM} below).
\par
Let us try to derive from \eqref{ziooo} a more explicit estimate.
At this aim, we can use the Coarea Formula with respect to the distance function, so to get
\[
\int_\Omega \left|g'\left(\frac{d_\Omega}{R_\Omega}-1\right)\right|^2\,dx=\int_0^{R_\Omega} \left|g'\left(\frac{t}{R_\Omega}-1\right)\right|^2\,P(\Omega_t)\,dt,
\]
where $\Omega_t=\{x\in\Omega\, :\,d_\Omega(x)>t\}$. We now recall that $t\mapsto P(\Omega_t)$ is monotone decreasing, in a convex set (see \cite[Lemma 2.2.2]{BB}). Thus we automatically get
\[
\int_\Omega \left|g'\left(\frac{d_\Omega}{R_\Omega}-1\right)\right|^2\,dx\le P(\Omega)\,\int_0^{R_\Omega} \left|g'\left(\frac{t}{R_\Omega}-1\right)\right|^2\,dt.
\]
A simple change of variable then leads to
\[
\int_\Omega \left|g'\left(\frac{d_\Omega}{R_\Omega}-1\right)\right|^2\,dx\le P(\Omega)\,R_\Omega\,\int_{-1}^{0} |g'(\tau)|^2\,d\tau.
\]
We can insert this estimate in \eqref{ziooo} to obtain
\[
\frac{1}{\lambda_1(\Omega;q)}\le R_\Omega^\frac{q+2}{q}\,P(\Omega)^\frac{2-q}{q}\,\left(\int_{-1}^0 |g'(\tau)|^2\,d\tau\right)^\frac{2-q}{q}.
\]
By using that $g$ is even and the identity
\[
\int_{-1}^1 |g'(\tau)|^2\,d\tau=\int_{-1}^1 |g(\tau)|^q\,d\tau,
\]
we have 
\[
\begin{split}
\int_{-1}^0 |g'|^2\,d\tau=\frac{1}{2}\,\int_{-1}^1 |g'|^2\,d\tau&=\frac{1}{2}\,\frac{q}{2-q}\,\left[\frac{2}{q}\,\int_{-1}^1 |g|^q\,d\tau-\int_{-1}^1 |g'|^2\,d\tau\right]\\
&=\frac{1}{2}\,\left(\frac{1}{\lambda_1((-1,1);q)}\right)^{\frac{q}{2-q}}
\end{split}
\]
In the last equality we used the optimality of $g$ and Proposition \ref{lm:base}, for the one-dimensional set $\Omega=(-1,1)$.
This gives
\[
\frac{1}{\lambda_1(\Omega;q)}\le R_\Omega^\frac{q+2}{q}\,P(\Omega)^\frac{2-q}{q}\,\left(\frac{1}{2}\right)^\frac{2-q}{q}\,\frac{1}{\lambda_1((-1,1);q)}.
\]
If we now recall the definition 
\[
\pi_{2,q}=\inf_{\varphi\in W^{1,2}_0((0,1))\setminus\{0\}}\frac{\|\varphi'\|_{L^2((0,1))}}{\|\varphi\|_{L^q((0,1))}},
\]
and use the scaling properties of Sobolev-Poincar\'e constants, we get
\[
\lambda_1((-1,1);q)=(\pi_{2,q})^2\,2^{-\frac{2+q}{q}}.
\]
Thus, we finally obtain the following
\begin{coro}
Let $1<q<2$ and let $\Omega\subset\mathbb{R}^N$ be an open bounded convex set. Then we have
\begin{equation}
\label{HM}
\lambda_1(\Omega;q)\ge \left(\frac{\pi_{2,q}}{2}\right)^2\,\frac{P(\Omega)^\frac{q-2}{q}}{R_\Omega^\frac{q+2}{q}}.
\end{equation}
\end{coro}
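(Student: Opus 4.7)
My plan is to exploit the dual formulation of Theorem \ref{teo:sub} by constructing an admissible pair from a one--dimensional optimal profile, transplanted to $\Omega$ via the distance function. First I would fix $g$ as the positive one--dimensional maximizer on $(-1,1)$ (whose existence and properties come from Proposition \ref{lm:base}), which is concave, even, and satisfies $-g''=g^{q-1}$. I would then set
\[
u(x)=R_\Omega^{\frac{2}{2-q}}\,g\!\left(\frac{d_\Omega(x)}{R_\Omega}-1\right),
\]
so that $u\in W^{1,2}_0(\Omega)$ is positive in $\Omega$. Since $\Omega$ is convex, $d_\Omega$ is concave (hence weakly superharmonic) with $|\nabla d_\Omega|=1$ a.e.; combining this with $g'\le 0$ on $(-1,0)\cup(0,1)$ (by evenness and concavity of $g$) and the chain rule, I would verify that $-\Delta u\ge u^{q-1}$ weakly in $\Omega$.

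Next, I would form the pair
\[
\phi_0=\frac{\nabla u}{u^{q-1}},\qquad f_0=-(q-1)\,\frac{|\nabla u|^2}{u^q},
\]
and check that $(f_0,\phi_0)\in\mathcal{A}(\Omega)$, exactly as in the proof of Theorem \ref{teo:sub}. Inserting into the dual characterization \eqref{formulab1} and simplifying $G_q(f_0,\phi_0)$, the whole $L^{2/(2-q)}$ norm collapses into $\int_\Omega |\nabla u|^2\,dx$, giving
\[
\frac{1}{\lambda_1(\Omega;q)}\le R_\Omega^2\,\left(\int_\Omega \bigl|g'\!\left(\tfrac{d_\Omega}{R_\Omega}-1\right)\bigr|^2\,dx\right)^{\!\frac{2-q}{q}}.
\]

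Then I would reduce this volume integral to a one--dimensional quantity. Using the coarea formula with respect to $d_\Omega$, the integral becomes $\int_0^{R_\Omega}|g'(t/R_\Omega-1)|^2\,P(\Omega_t)\,dt$ with $\Omega_t=\{d_\Omega>t\}$. The key geometric input is that $t\mapsto P(\Omega_t)$ is monotone nonincreasing on convex sets (by \cite[Lemma 2.2.2]{BB}), so I can bound $P(\Omega_t)\le P(\Omega)$; a change of variable $\tau=t/R_\Omega-1$ and the symmetry of $g$ then yield
\[
\int_\Omega \bigl|g'\!\left(\tfrac{d_\Omega}{R_\Omega}-1\right)\bigr|^2\,dx\le P(\Omega)\,R_\Omega\,\int_{-1}^0 |g'(\tau)|^2\,d\tau=\tfrac{1}{2}\,P(\Omega)\,R_\Omega\,\int_{-1}^1 |g'|^2\,d\tau.
\]

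Finally, I would identify the remaining one--dimensional integral using Proposition \ref{lm:base} applied to $\Omega=(-1,1)$: testing the Euler--Lagrange equation for $g$ against itself gives $\int_{-1}^1|g'|^2=\int_{-1}^1|g|^q$, which by the same proposition equals $(\lambda_1((-1,1);q))^{-q/(2-q)}$. Inserting this and using the scaling identity $\lambda_1((-1,1);q)=(\pi_{2,q})^2\,2^{-(2+q)/q}$ (which follows directly from the definition of $\pi_{2,q}$ by rescaling from $(0,1)$ to $(-1,1)$) produces \eqref{HM} after collecting the factors of $R_\Omega$ and $P(\Omega)$. The only nontrivial geometric step is the monotonicity of $t\mapsto P(\Omega_t)$; everything else is the clean passage through the dual formulation of Theorem \ref{teo:sub} and bookkeeping of exponents.
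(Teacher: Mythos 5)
Your proof follows the paper's argument step for step: transplant the one--dimensional Lane--Emden profile $g$ via the distance function, feed the resulting pair $(f_0,\phi_0)$ into the dual characterization \eqref{formulab1}, reduce $\int_\Omega|\nabla u|^2$ to a one--dimensional quantity by coarea together with the monotonicity of $t\mapsto P(\Omega_t)$ on convex sets, and close with the identity $\lambda_1((-1,1);q)=(\pi_{2,q})^2\,2^{-(2+q)/q}$. The structure, the key geometric lemma, and the bookkeeping all match.

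One concrete slip to fix: you assert that $g'\le 0$ on $(-1,0)\cup(0,1)$ ``by evenness and concavity of $g$,'' and then invoke this to get $-\Delta u\ge u^{q-1}$. That sign is wrong on the interval that actually matters. Since $g$ is positive, even, concave, and vanishes at $\pm1$, it is \emph{increasing} on $(-1,0)$ and decreasing on $(0,1)$; in particular $g'\ge 0$ on $(-1,0)$, which is precisely where $d_\Omega/R_\Omega-1$ takes its values (namely in $[-1,0]$). The chain rule gives
\[
\Delta u = R_\Omega^{\frac{2}{2-q}-2}\,g''\!\left(\tfrac{d_\Omega}{R_\Omega}-1\right)|\nabla d_\Omega|^2 + R_\Omega^{\frac{2}{2-q}-1}\,g'\!\left(\tfrac{d_\Omega}{R_\Omega}-1\right)\Delta d_\Omega,
\]
and the second term is $\le 0$ (in distributional sense) exactly because $g'\ge 0$ on $(-1,0)$ and $\Delta d_\Omega\le 0$ by convexity. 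Had $g'$ been $\le 0$ on $(-1,0)$ as you wrote, that term would point the other way and you would obtain $-\Delta u\le u^{q-1}$, breaking admissibility of $(f_0,\phi_0)$. Correct the sign of $g'$ and the rest goes through verbatim.
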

\begin{comment}
\begin{oss}
\label{oss:HM}
Inequality \eqref{HM} is sharp, this can be checked by taking the slab--type sequence 
\[
\Omega_L=\left(-\frac{L}{2},\frac{L}{2}\right)^{N-1}\times(0,1),
\] 
with $L$ diverging to $+\infty$. Indeed, for this family of sets we have (see \cite[Lemma A.2]{BM})
\[
\lambda_1(\Omega_L;q)\sim \frac{(\pi_{2,q})^2}{L^{(N-1)\,\frac{2-q}{q}}}\quad \mbox{ and }\quad P(\Omega_L)\sim 2\,L^{N-1},\qquad \mbox{ as }L\to +\infty,
\]
and $R_{\Omega_L}=1/2$, for $L>1$. 
\par
By taking the limits $q \to 1$ and $q\to 2$ in \eqref{HM}, we recover
\[
T(\Omega)\le \frac{1}{3}\,P(\Omega)\,R_\Omega^3\qquad \mbox{ and }\qquad \lambda_1(\Omega)\ge \left(\frac{\pi}{2}\right)^2\,\frac{1}{R_\Omega^2}.
\]
\end{oss}
\end{comment}
\begin{oss}
\label{oss:HM}
When compared with the {\it Hersch-Makai inequality} 
\begin{equation}
\label{HMtrue}
\lambda_1(\Omega;q)\ge \left(\frac{\pi_{2,q}}{2}\right)^2\, \frac{|\Omega|^\frac{q-2}{q}}{R_\Omega^2},
\end{equation}
already recalled in the Introduction, we see that the estimate \eqref{HM} is slightly weaker. Indeed, the former implies the latter, by recalling that for a open bounded convex set we have
\[
|\Omega|\le R_\Omega\,P(\Omega).
\]
Nevertheless, inequality \eqref{HM} {\it is still sharp}: it is sufficient to take
the ``slab--type'' sequence 
\[
\Omega_L=\left(-\frac{L}{2},\frac{L}{2}\right)^{N-1}\times(0,1),
\] 
with $L$ diverging to $+\infty$. For this family of sets we have (see \cite[Lemma A.2]{BM})
\[
\lambda_1(\Omega_L;q)\sim \frac{(\pi_{2,q})^2}{L^{(N-1)\,\frac{2-q}{q}}}\quad \mbox{ and }\quad P(\Omega_L)\sim 2\,L^{N-1},\qquad \mbox{ as }L\to +\infty,
\]
and $R_{\Omega_L}=1/2$, for $L>1$.
\par
We also observe that this slight discrepancy between \eqref{HM} and \eqref{HMtrue} is lost in the limit as $q$ converges to $2$: in both cases the estimates boil down to 
\[
\lambda_1(\Omega)\ge \left(\frac{\pi}{2}\right)^2\, \frac{1}{R_\Omega^2},
\]
which is the original Hersch sharp inequality from \cite[Th\'eor\`eme 8.1]{He2}. We also refer to \cite[Theorem 5.5]{BuGuMa}, \cite[Theorem 5.1]{DDG} and \cite[Theorem 2.1]{Kaj} for other proofs and extensions of this result.
\end{oss}
\medskip

\end{document}